\newcommand{\A}{\mathcal{A}}
\newcommand{\B}{\mathcal{B}}
\newcommand{\C}{\mathcal{C}}
\newcommand{\R}{\mathbb{R}} 
\newcommand{\Z}{\mathbb{Z}} 
\newcommand{\eps}{\varepsilon}
\newcommand{\BS}{\mathit{BS}} 
\newcommand{\KS}{\mathit{KS}} 
\newcommand{\SSG}{\mathit{SS}} 
\newcommand{\HSK}{\widehat{\KS}}
\newcommand{\HSS}{\widehat{\SSG}}
\newcommand{\RP}{\R\mathrm{P}}
\DeclareMathOperator{\dist}{dist}
\newcommand{\Lujia}[1]{{\color{blue}[Lujia: #1]}}
\begin{document}

	\newtheorem{theorem}{Theorem}
	\newtheorem{observation}[theorem]{Observation}
	\newtheorem{corollary}[theorem]{Corollary}
	\newtheorem{algorithm}[theorem]{Algorithm}
	\newtheorem{definition}[theorem]{Definition}
	\newtheorem{guess}[theorem]{Conjecture}
	\newtheorem{claim}[theorem]{Claim}
	\newtheorem{problem}[theorem]{Problem}
	\newtheorem{question}[theorem]{Question}
	\newtheorem{lemma}[theorem]{Lemma}
	\newtheorem{proposition}[theorem]{Proposition}
	\newtheorem{fact}[theorem]{Fact}

	\makeatletter
	\newcommand\figcaption{\def\@captype{figure}\caption}
	\newcommand\tabcaption{\def\@captype{table}\caption}
	\makeatother

	\newtheorem{case}[theorem]{Case}
	\newtheorem{conclusion}[theorem]{Conclusion}
	\newtheorem{condition}[theorem]{Condition}
	\newtheorem{conjecture}[theorem]{Conjecture}
	\newtheorem{criterion}[theorem]{Criterion}
	\newtheorem{example}[theorem]{Example}
	\newtheorem{exercise}[theorem]{Exercise}
	\newtheorem{notation}{Notation}
	\newtheorem{remark}[theorem]{Remark}
	\newtheorem{solution}[theorem]{Solution}

	\title{\textbf{Colouring signed analogues of Kneser, Schrijver, and Borsuk graphs}}
	
	\author[1]{Luis Kuffner}
	\author[2]{Reza Naserasr}
	\author[3]{Lujia Wang}
	\author[4]{\linebreak Xiaowei Yu}
	\author[2,3]{Huan Zhou}
	\author[3]{Xuding Zhu}

        \affil[1]{\small École normale supérieure, PSL University, Paris, France. {Email:\texttt{luis.kuffner.pineiro@ens.psl.eu}}}
	\affil[2]{\small Université Paris Cité, CNRS, IRIF, F-75013, Paris, France. {Emails: \texttt{\{reza, zhou\}@irif.fr}}}
	\affil[3]{\small Zhejiang Normal University, Jinhua, China. {Emails: \texttt{\{ljwang, huanzhou, xdzhu\}@zjnu.edu.cn}}}
	\affil[4]{\small Jiangsu Normal University, Xuzhou, China. {Email: \texttt{xwyu@jsnu.edu.cn}}}

	\maketitle
	
	\begin{abstract}
		
	    The Kneser signed graph $\KS(n,k)$, $k\leq n$, is the graph whose vertices are signed $k$-subsets of $[n]$ (i.e. $k$-subsets $S$ of $\{ \pm 1, \pm 2, \ldots, \pm n\}$ such that $S\cap (-S)=\emptyset$). Two vertices $A$ and $B$ are adjacent with a positive edge if $A\cap (-B)=\emptyset$ and with a negative edge if $A\cap B=\emptyset$. We prove that the balanced chromatic number of $\KS(n,k)$ is $n-k+1$.
        We then introduce the signed analogue of Schrijver graphs and show that they form vertex-critical subgraphs of $\KS(n,k)$ with respect to balanced colouring.
		Further connection to topological methods, in particular, connection to Borsuk signed graphs is also considered.   
		
	\end{abstract}
	
	\section{Introduction}

	A \emph{signed graph} $(G,\sigma)$, is a graph $G=(V,E)$ endowed with a \emph{signature function} $\sigma: E(G)\rightarrow \{-1, +1\}$ which assigns to each edge $e$ a sign $\sigma(e)$. An edge $e$ is called a {\em positive edge} (or {\em negative edge}, respectively) if $\sigma(e) = +1$ (or $\sigma(e) = -1$, respectively). The graph $G$ is called the {\em underlying graph} of $(G,\sigma)$.

	\begin{definition}
		\label{def-switching}
		Assume $(G, \sigma)$ is a signed graph and $v$ is a vertex of $G$. The operation \emph{vertex switching} of $v\in V(G)$ results in a signature $\sigma'$ 
		defined as 
		\[
		\sigma'(e) = \begin{cases}
			- \sigma(e), &\text{if $v$ is a vertex of $e$ and $e$ is not a loop}; \cr 
			\sigma(e), &\text{ otherwise}.
		\end{cases}
		\] 
		Two signatures $\sigma_1$ and $\sigma_2$ on the same underlying graph $G$ are said to be \emph{switching equivalent}, denoted by $\sigma_1\equiv\sigma_2$, if one is obtained from the other by a sequence of vertex switchings. 
	\end{definition}
	
	Assume $(G, \sigma)$ is a signed graph and $X$ is a subset of $V(G)$. If we switch all the vertices of $X$ in any order, then the resulting signature $\sigma'$ is obtained from $\sigma$ by flipping the signs of all edges in the edge cut $(X, V(G)\setminus X)$ of $G$. This operation is referred to as \emph{the switching of} $X$ (or equivalently, switching of  $V(G)\setminus X$), and, if $X$ is not specified, it is \emph{a switching of} $(G,\sigma)$. Thus $\sigma_1 \equiv \sigma_2$ if and only if the set $\{e: \sigma_1(e) \neq \sigma_2(e)\}$ is an edge cut. 
	
	Given a graph $G$, we denote by $(G,+)$ ($(G,-)$, respectively) the signed graph whose signature function is constantly positive (negative, respectively) on $G$. 
	
	\begin{definition}
		\label{def-balanced}
		A signed graph $(G, \sigma)$ is {\em balanced} if $(G, \sigma) \equiv (G,+)$. A subset $X$ of vertices of a signed graph $(G, \sigma)$ is called balanced if $(G[X], \sigma)$ is balanced. 
	\end{definition} 
	
	Note that switching does not change the parity of the number of negative edges in a cycle, and a signed cycle $(C, \sigma)$ is balanced if it has an even number of negative edges, or equivalently, $\prod_{e \in E(C)}\sigma(e) =1$. If a signed graph $(G, \sigma)$ is balanced then every cycle must be balanced. Harary \cite{Harary1953} proved that this necessary condition is also sufficient.

	\begin{definition}
		\label{def-balancedcolouring}
		Assume $(G, \sigma)$ is a signed graph and $p$ is a positive integer. A {\em balanced $p$-colouring} of $(G, \sigma)$ is a mapping $f: V(G) \to [p]$ such that for each colour $i$, the set $f^{-1}(i)$ is a balanced set of $V(G)$. The {\em balanced chromatic number} of $(G, \sigma)$ is defined as
		$$\chi_b(G, \sigma) = \min\{p:\text{ there is a balanced $p$-colouring of $(G, \sigma)$}\}.$$
	\end{definition}
    A signed graph $(G,\sigma)$ admits a balanced $p$-colouring for some $p$ if and only if it has no negative loop. Thus $\chi_b(G, \sigma)$ is well-defined for signed graphs with no negative loop. On the other hand, the existence of a positive loop does not affect the balanced chromatic number. Thus in this work, negative loops are never considered and it is assumed every vertex has a positive loop attached to it. A signed graph $(G, \sigma)$ is {\em simple} if there are no parallel edges of opposite signs (or no negative cycle of length 2).

    The first reference to the parameter $\chi_b(G, \sigma)$ is due to Zaslavsky \cite{Zaslavsky1987}, where the term ``balanced partition number" is used instead. It is closely related to the ``zero-free chromatic number" or ``strict chromatic number" defined in \cite{Zaslavsky1982-DM}. In particular, a \emph{zero-free $p$-colouring} of $(G,\sigma)$ is a mapping $c:V(G)\to\{\pm 1, \pm 2,\ldots,\pm p\}$ such that $c(x)\neq \sigma(xy)c(y)$ for every edge $xy$ of $G$. The \emph{zero-free chromatic number}, $\chi^*(G,\sigma)$ is thus the minimum $p$ such that $(G,\sigma)$ admits a zero-free $p$-colouring.

    It is observed that each pair of colour classes $c^{-1}(i)\cup c^{-1}(-i)$ in a zero-free colouring $c$ of $(G,-\sigma)$ forms a single balanced set of $(G,\sigma)$. Hence, $\chi_b(G,\sigma)=\chi^*(G,-\sigma)$ \cite[Theorem~1]{Zaslavsky1987}.

    One of the most noticeable differences between the two colouring schemes is that for balanced colouring, it is the unbalanced (equivalently, negative) cycles that create chromatic obstacles, while for zero-free colouring the same role is played by both positive odd cycles and negative even cycles.
    
    As we shall show in Section~\ref{Borsuk signed}, when transferring the chromatic obstacles to topological ones, the unbalanced cycles correspond exactly to non-contractible cycles in a projective space. This is one of the main motivations that balanced colouring is preferred in the current study. 

    Notice that for signed cycles, contracting positive edges does not change their parity. This also makes balancedness interact better with the minor theory for signed graphs and implies richer structures. The reader is referred to\cite{JMNNQ24+}, where the authors extended the famous Hadwiger conjecture to a signed graph version.

    We further remark on the following two connections between the balanced chromatic number of signed graphs and the classic chromatic number of graphs, which are easy consequences of the corresponding properties of zero-free colouring (see\cite{Zaslavsky1982-DM}). Denote by $(G, \pm)$ the signed graph obtained from $G$ by replacing each edge $e=xy$ with a pair of parallel edges of opposite signs.
	\begin{proposition}\label{prop:Xb_NegativeGraph}
		For every graph $G$, $\chi_b(G,-)=\lceil\chi(G)/2\rceil$ and $\chi_b(G,\pm)=\chi(G)$.
	\end{proposition}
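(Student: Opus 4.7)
The plan is to exploit the identity $\chi_b(G,\sigma)=\chi^*(G,-\sigma)$ recalled just before the statement, so that both equalities are reduced to understanding zero-free colourings of $(G,+)$ and of $(G,\pm)$. Since the underlying graph of $(G,\pm)$ is unchanged by flipping all edge signs, we have $-\sigma\equiv\sigma$ in that case; for $(G,-)$ we simply get $-\sigma=+$. Thus the two statements reduce to verifying
\[
\chi^*(G,+)=\lceil\chi(G)/2\rceil \qquad\text{and}\qquad \chi^*(G,\pm)=\chi(G).
\]

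For the first reduced identity, I would unfold the definition: a zero-free $p$-colouring of $(G,+)$ is a map $c:V(G)\to\{\pm 1,\ldots,\pm p\}$ with $c(x)\neq c(y)$ for every edge $xy$, which is precisely a proper colouring of $G$ with at most $2p$ colours. For the upper bound I take any proper $\chi(G)$-colouring of $G$, pair up its colour classes arbitrarily, and assign the values $+i$ and $-i$ to the two classes of the $i$-th pair; this uses $p=\lceil\chi(G)/2\rceil$. For the lower bound, any zero-free $p$-colouring yields a proper colouring of $G$ with palette $\{\pm 1,\ldots,\pm p\}$, forcing $2p\geq\chi(G)$, hence $p\geq\lceil\chi(G)/2\rceil$.

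For the second reduced identity, a zero-free $p$-colouring of $(G,\pm)$ must satisfy both $c(x)\neq c(y)$ (from the positive copy of each edge) and $c(x)\neq -c(y)$ (from the negative copy). Equivalently, the composition of $c$ with the quotient $\{\pm 1,\ldots,\pm p\}\to\{1,\ldots,p\}$ identifying $i$ with $-i$ is a proper $p$-colouring of $G$, giving $\chi^*(G,\pm)\geq\chi(G)$. Conversely, any proper $\chi(G)$-colouring of $G$, viewed as a map into $\{1,\ldots,\chi(G)\}\subset\{\pm 1,\ldots,\pm\chi(G)\}$, is zero-free, so $\chi^*(G,\pm)\leq\chi(G)$.

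There is no real obstacle here; the only care needed is the bookkeeping translating between the colour palette $\{\pm 1,\ldots,\pm p\}$ and ordinary proper colourings, and remembering that $(G,\pm)$ is its own negation so that applying the $\chi_b=\chi^*\circ(-\,\cdot\,)$ formula does not introduce a sign change. Combining the two reduced identities with $\chi_b(G,\sigma)=\chi^*(G,-\sigma)$ concludes the proof.
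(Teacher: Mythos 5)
Your proposal is correct and follows essentially the route the paper intends: the paper derives this proposition as an ``easy consequence'' of the identity $\chi_b(G,\sigma)=\chi^*(G,-\sigma)$ and the basic properties of zero-free colourings from Zaslavsky's work, which is exactly the reduction you carry out explicitly. Your computations of $\chi^*(G,+)$ and $\chi^*(G,\pm)$, and the observation that $(G,\pm)$ is invariant under negating the signature, are all accurate.
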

    
    In the sense of the second equation of \Cref{prop:Xb_NegativeGraph}, colouring graphs is equivalent to colouring special signed graphs. Many classical results about graph colouring in the setting of colouring signed graphs become challenging problems, and conjectures about graph colouring in the setting of colouring of signed graphs become more profound.
    
    For example, as a generalization of the Four Colour Theorem to signed graphs, M\'{a}\v{c}ajov\'{a},  Raspaud and \v{S}koviera \cite{m2016chromatic} conjectured that every simple planar signed graph is 0-free 4-colourable. That is equivalent to claiming that every signed simple planar graph admits a balanced 2-colouring. The conjecture received a lot of attention and was refuted by Kardo\v{s} and Narboni \cite{KN21}.

	For a positive integer $n$, let $[n] = \{1,2,\ldots, n\}$. Denote by $\binom{[n]}{k}$ the set of all $k$-subsets of $[n]$. For $n\ge 2k$, the Kneser graph $K(n,k)$ has vertex set $\binom{[n]}{k}$, in which two vertices are adjacent if they are disjoint $k$-subsets of $[n]$.  It was conjectured by Kneser \cite{MR0093538} and proved by Lov\'{a}sz \cite{MR0514625}  that the chromatic number of $K(n,k)$ is $n-2k+2$. 
	Schrijver graph $S(n,k)$ is the subgraph of $K(n,k)$ induced by the set of stable  $k$-subsets, where a $k$-subset $A$ of $[n]$ is {\em stable} if $i \in A$ implies $i+1 \notin A$, where $i\in [n-1]$, and $n \in A$ implies that $1 \notin A$. It was proved by Schrijver \cite{Schrijver} that $S(n,k)$ is a vertex-critical subgraph of $K(n,k)$, i.e., $\chi(S(n,k)) = \chi(K(n,k))=n-2k+2$ and for any vertex $A$ of $S(n,k)$, $\chi(S(n,k)-A) =n-2k+1$. 
	
    Lov\'{a}sz's proof of Kneser conjecture initiated the application of topological methods in graph colouring. Presently, the study of topological bounds for graph parameters forms an important and elegant part of chromatic graph theory.
	
	The goal of this paper is to generalize the concepts of Kneser graphs and Schrijver graphs to Kneser signed graphs and Schrijver signed graphs and to explore applications of topological methods in the colouring of signed graphs. 
	
	In the rest of this paper $k,n$ are positive integers satisfying $k\leq n$. Let $ \pm [n] =\{\pm 1, \pm 2, \ldots, \pm n\}$. A \emph{signed $k$-subset} of $[n]$ is a $k$-subset $A$ of $\pm [n]$ such that for any $i \in [n]$, $|A \cap \{i, -i\} | \le 1$. We denote by $\binom{[n]}{\pm k}$ the set of all signed $k$-subsets of $[n]$. For $A \in  \binom{[n]}{\pm k}$, let $-A = \{ -a: a \in A\}$. Thus a $k$-subset of $\pm [n]$ is a signed $k$-subset of $[n]$ if and only if $A\cap (-A)=\emptyset$. A signed $k$-subset of $[n]$ can naturally be represented by a $\{-1,0,1\}$-vector of length $n$ whose coordinates are labeled by $[n]$ and whose number of nonzero coordinates is $k$.

	\begin{definition}
		\label{def-signedKneser} 
		The Kneser signed graph ${\KS}(n,k)$ has $\binom{[n]}{\pm k}$ as the vertex set where $A,B$ are joined by a positive edge if $A \cap (-B) = \emptyset$, and  $A, B$ are joined by a negative edge if $A \cap B = \emptyset$.
	\end{definition} 
	
	Viewing vertices as vectors, vertices $A$ and $B$ are adjacent by a positive (respectively, negative) edge if the coordinatewise product is non-negative (respectively non-positive). 
	
	Analogous to the Kneser graph and its relation to the fractional chromatic number of graphs, Kneser signed graphs are homomorphism targets for the study of the fractional balanced chromatic number of signed graphs. For more details on this subject and the basic properties of Kneser signed graphs, we refer to \cite{KNWYZZ25}.  In this paper, we study the balanced colouring of Kneser signed graphs and prove the following result:
	
	\begin{theorem}\label{thm-signedK}
		For any positive integers $n \ge k \ge 1$, $$\chi_b(\KS(n,k)) = n-k+1.$$
	\end{theorem}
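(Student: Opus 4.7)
The plan is to prove the two bounds separately. The upper bound is by an explicit construction, while the lower bound calls for a topological (Borsuk--Ulam) argument, most cleanly routed through a signed analogue of the Borsuk graph.

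For the upper bound $\chi_b(\KS(n,k))\le n-k+1$, I would define $c:V(\KS(n,k))\to[n-k+1]$ by $c(A)=\min\{|a|:a\in A\}$; this lies in $[n-k+1]$ because $\{|a|:a\in A\}$ is a $k$-subset of $[n]$. I then show each colour class $C_i=c^{-1}(i)$ is balanced. Write $C_i=C_i^{+}\cup C_i^{-}$, where $C_i^{+}=\{A\in C_i:i\in A\}$ and $C_i^{-}=\{A\in C_i:-i\in A\}$. A direct case analysis on the edge definitions shows that any edge inside $C_i^{+}$ or inside $C_i^{-}$ is positive (the shared element $\pm i$ forbids the disjointness required for a negative edge), while any edge between $C_i^{+}$ and $C_i^{-}$ is negative ($i\in A\cap(-B)$ rules out a positive edge). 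Switching $C_i^{-}$ flips exactly the cross edges and produces an all-positive signature on $C_i$, so $C_i$ is balanced, giving the upper bound.

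For the lower bound $\chi_b(\KS(n,k))\ge n-k+1$, I would use a topological argument analogous to Lov\'asz's proof of the Kneser conjecture. Fix $n$ reference points $v_1,\ldots,v_n$ in general position on the sphere $S^{n-k}$. For each $x\in S^{n-k}$ and each $S\in\binom{[n]}{k}$, the consistent signed $k$-subset $A(S,x)=\{\mathrm{sign}\langle v_i,x\rangle\cdot i:i\in S\}$ is a vertex of $\KS(n,k)$. Suppose for contradiction there is a balanced $(n-k)$-colouring $c$ with bipartition signs $\epsilon:V(\KS(n,k))\to\{\pm 1\}$; by exploiting the freedom in $\epsilon$ (switching a colour class does not change balancedness), I would first arrange $\epsilon(-A)=-\epsilon(A)$ whenever $c(A)=c(-A)$, which is already forced by the negative edge between $A$ and $-A$. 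For each $(i,s)\in[n-k]\times\{\pm 1\}$, set
\[
U_{i,s}=\bigl\{x\in S^{n-k}:\exists\,S\in\tbinom{[n]}{k},\ c(A(S,x))=i,\ \epsilon(A(S,x))=s\bigr\},
\]
obtaining an open cover of $S^{n-k}$ by $2(n-k)$ sets. The antipodal action $x\mapsto -x$ interacts with this cover via $A(S,-x)=-A(S,x)$ together with the forced sign-flip on $\epsilon$. Applying a Borsuk--Ulam-type result (e.g.\ Ky Fan's lemma or a $\Z_2$-index computation) to this $\Z_2$-equivariant setup then yields the required obstruction $n-k\ge n-k+1$.

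The main obstacle is the rigorous topological step: extracting from the antipodal structure a genuine obstruction ruling out $n-k$ colours. A cleaner route, matching the paper's later development, is to first introduce a Borsuk signed graph $\BS$ on $S^{n-k}$ with positive edges between nearby points and negative edges between nearly antipodal points, to prove $\chi_b(\BS)\ge n-k+1$ directly from Borsuk--Ulam, and then to exhibit a signed graph homomorphism $\BS\to\KS(n,k)$ sending $x$ to a ``top-$k$'' consistent signed subset chosen from the $v_i$'s with largest $|\langle v_i,x\rangle|$. Verifying that this homomorphism preserves both positive and negative edges (using the general-position hypothesis on the $v_i$ and the right angular parameters in $\BS$) is the technical heart of the lower bound.
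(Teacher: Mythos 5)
Your upper bound is correct and is essentially the paper's: your colour classes $c^{-1}(i)$ are contained in the sets $\B_i(n,k)=\{A: A\cap\{i,-i\}\neq\emptyset\}$ that the paper uses, and your verification that each such set is balanced (internal edges positive within $C_i^+$ and $C_i^-$, cross edges negative, so switching one side works) is sound.

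The lower bound, however, has a genuine gap that you yourself flag: you never actually execute the topological step, in either of the two routes you sketch. In the first route, the missing pieces are (i) checking that $U_{i,+}\cap U_{i,-}=\emptyset$ (this does hold, because two consistent signed $k$-sets $A(S,x)$, $A(S',x)$ at the same point $x$ are joined by a positive edge and hence, if equally coloured, must carry the same $\epsilon$), (ii) handling points $x$ orthogonal to some $v_i$, and (iii) naming the theorem that turns $n-k$ antipodally-paired open sets covering $S^{n-k}$ into a contradiction — the ordinary Borsuk--Ulam covering theorem does not apply to $2(n-k)$ sets; you need the Ky Fan--type statement (the paper's Theorem~\ref{thm:KyFan}) that a system $\A$ of open sets with $A\cap(-A)=\emptyset$ and $\bigcup_{A\in\A}(A\cup -A)=S^d$ must have $|\A|\ge d+1$. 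In the second route, the claim that $\chi_b(\BS(n-k,\eps))\ge n-k+1$ follows ``directly from Borsuk--Ulam'' is not justified: that inequality is the signed Borsuk--Ulam theorem, which the paper shows is \emph{equivalent} to (but not an immediate corollary of) the classical statement, via Ky Fan or the Lusternik--Schnirelmann category of $\RP^d$. As written, the heart of the lower bound is deferred rather than proved. Note also that the paper's actual proof of this particular lower bound is far more elementary than anything you propose: order $\pm[n]$ cyclically as $(1,-1,2,-2,\ldots,n,-n)$, observe that every stable $k$-subset in this order is a signed $k$-subset and that disjointness gives a negative edge, so $(S(2n,k),-)$ sits inside $\KS(n,k)$; then $\chi_b(G,-)=\lceil\chi(G)/2\rceil$ together with $\chi(S(2n,k))=2n-2k+2$ gives $\chi_b(\KS(n,k))\ge n-k+1$ with no new topology. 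The topological machinery you invoke is what the paper reserves for the strictly stronger vertex-criticality statement about $\HSS(n,k)$.
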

	
	\begin{definition}
		\label{def-signedSchijver}
		A signed $k$-subset $A$ of $[n]$ is said to be {\em alternating} if 
		$A$ is of the form 
		$$\{a_1, -a_2, \ldots, (-1)^{k-1}a_k\}\quad \text{ or }\quad \{-a_1,  a_2, \ldots, (-1)^{k}a_k\},$$ 
		where $1 \leq a_1 < a_2 < \ldots < a_k\leq n$. Denote by $\A(n,k)$ the family of alternating signed $k$-subsets of $[n]$. The {\em Schrijver signed graph} $\SSG(n,k)$ is the subgraph of $\KS(n,k)$ induced by the vertex set $\A(n,k)$.
		
	\end{definition}
	
	In terms of vectors, $\A(n,k)$ consists of those vertices of $\KS(n,k)$ whose nonzero entries are alternating.

	Let $\HSK(n,k)$ be the subgraph of $\KS(n,k)$ induced by the set of vertices whose first nonzero coordinate is positive. Define $\HSS(n,k)$ similarly.
    
    Observe that replacing $A$ with $-A$ in $\HSK(n,k)$ is the same as switching the vertex $A$. Given a signed graph $(G,\sigma)$ and vertex $u$ of $(G,\sigma)$, adding a vertex $-u$ which is a switched copy of $u$, or deleting $-u$ if such a vertex already exists, does not affect its balanced chromatic number. Thus Theorem~\ref{thm-signedK} is equivalent to claiming that $\chi_b(\HSK(n,k)) = n-k+1.$ Next, we shall prove that $\HSS(n,k)$ is a vertex-critical subgraph of $\KS(n,k)$.
	
	\begin{theorem}
		\label{thm:signedS}
		For any positive integers $n \ge k \ge 1$, $$\chi_b(\HSS(n,k)) = n-k+1.$$ Moreover, for any vertex $A$ of $\HSS(n,k)$, 
		$\HSS(n,k) - A$ admits an $(n-k)$-colouring.
	\end{theorem}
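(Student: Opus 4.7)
The plan is to handle the three assertions separately. The upper bound $\chi_b(\HSS(n,k)) \le n-k+1$ is immediate: since $\HSS(n,k)$ is an induced subgraph of $\HSK(n,k)$ and, by the observation preceding the statement, $\chi_b(\HSK(n,k)) = \chi_b(\KS(n,k)) = n-k+1$ by Theorem~\ref{thm-signedK}, any balanced $(n-k+1)$-colouring of $\HSK(n,k)$ restricts to one of $\HSS(n,k)$.

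For the matching lower bound, the plan is to argue topologically, along the lines of the B\'ar\'any--Lov\'asz--Schrijver framework but adapted to the signed setting. One represents each alternating signed $k$-subset $A\in\A(n,k)$ by an open cell of a suitable $\Z_2$-equivariant cell complex (each nonzero coordinate contributes a direction on $S^{n-1}$ with sign prescribed by $A$), turning $\HSS(n,k)$ into a graph on a subdivision of $\RP^{n-1}$; the translation ``unbalanced cycles $\leftrightarrow$ non-contractible loops in projective space'' alluded to in the introduction is the key link. A balanced $p$-colouring with $p\le n-k$ would, after switching each class to all-positive, yield an antipode-respecting map that a Borsuk--Ulam or Tucker-style lemma rules out; the value $n-k+1$ matches the dimension of the complex.

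For the criticality, the plan is an explicit combinatorial construction. Fix $A \in \HSS(n,k)$ with positional support $\mathrm{supp}(A) = \{a_1 < \cdots < a_k\}$, and enumerate $[n]\setminus \mathrm{supp}(A) = \{p_1 < \cdots < p_{n-k}\}$. For each $B \in \HSS(n,k) \setminus \{A\}$ set
\[
c(B) = \min\{i \in [n-k] : p_i \in \mathrm{supp}(B)\}.
\]
This is well-defined because the sign pattern of a vertex of $\HSS(n,k)$ is determined by its positional support, so $B \neq A$ forces $\mathrm{supp}(B) \neq \mathrm{supp}(A)$, and since $|B|=|A|=k$ some element of $\mathrm{supp}(B)$ lies outside $\mathrm{supp}(A)$. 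The colouring uses at most $n-k$ colours. To verify that each class $c^{-1}(i)$ is balanced, partition it into $B_i^+ = \{B \in c^{-1}(i) : +p_i \in B\}$ and $B_i^- = \{B \in c^{-1}(i) : -p_i \in B\}$; a direct check of the adjacency rules shows that within either part the shared signed element at $p_i$ lies in $B\cap B'$ and blocks negative edges, while between the two parts the opposite signs at $p_i$ put $p_i \in B\cap(-B')$ and block positive edges. Switching every vertex of $B_i^-$ then makes all edges of the class positive, so $c^{-1}(i)$ is balanced.

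The hard step is the lower bound. The upper bound and criticality arguments are essentially combinatorial, and the criticality verification is clean because the local condition at the shared position $p_i$ completely controls the signs of edges within a colour class. By contrast, $\chi_b(\HSS(n,k)) \ge n-k+1$ does not follow from containment (which goes the wrong way), so a genuine topological input is needed; pinning down the right $\Z_2$-space attached to $\HSS(n,k)$ and checking that a balanced colouring gives an antipode-avoiding map suitable for a Borsuk--Ulam or Tucker lemma is the delicate part.
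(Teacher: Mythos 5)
Your upper bound and criticality arguments are correct and essentially the paper's. For criticality the paper covers $\HSS(n,k)-A$ by the $n-k$ balanced sets $\B_i(n,k)=\{B:\ B\cap\{i,-i\}\neq\emptyset\}$ for $i\notin\mathrm{supp}(A)$, using exactly your observation that an alternating set is determined by its positional support, so only $A$ itself escapes the cover; your ``least uncovered position'' rule just refines this cover into a partition, and your verification that each class is balanced (common signed element blocks negative edges, opposite signs block positive edges, one switching makes everything positive) is the same check the paper performs once for $\B_i(n,k)$.

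The lower bound, however, is a genuine gap: you have only named a strategy, and the strategy as stated would not work. Two concrete ingredients are missing. First, you need an embedding of $\pm[n]$ into $S^{n-k}$ (not $S^{n-1}$) sending $i$ and $-i$ to antipodal points such that \emph{every open hemisphere contains an alternating $k$-set}; the paper proves this Gale--Schrijver-type statement by placing $v_i=(-1)^i(i,i^3,\ldots,i^{2(n-k)+1})$ on the odd moment curve and analysing sign changes of the odd polynomial $p(x)=\sum a'_j x^{2j-1}$ to show the $k$ points on the positive side of a suitably rotated hyperplane alternate in sign. Nothing in your sketch supplies this. Second, the topological input cannot be the ordinary Borsuk--Ulam/Tucker statement that some symmetric set in a cover \emph{contains} an antipodal pair: a balanced colour class of $\SSG(n,k)$ is perfectly allowed to contain two disjoint $k$-sets (a negative edge); what it cannot contain is an unbalanced \emph{cycle}. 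The paper therefore needs the stronger fact (derived from Ky Fan's theorem, equivalently $\mathrm{cat}(\RP^{n-k})=n-k$) that in any cover of $S^{d}$ by $d$ symmetric open sets, some set \emph{connects} a pair of antipodal points; a path from $x_0$ to $-x_0$ inside the set $A_i=\{x:\exists X\in\A(n,k),\ X\subset H_x,\ i\in c(X)\}$ is then discretised by compactness into a chain of hemispheres with consecutive overlaps, yielding a closed walk in colour class $i$ with all edges positive except the single negative edge between the $k$-sets at $x_0$ and $-x_0$ --- an unbalanced cycle. Without identifying this ``connecting'' form of the theorem and the hemisphere construction, the reduction from a balanced $(n-k)$-colouring to a topological contradiction does not go through.
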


	\section{Balanced colouring Kneser signed graphs and  Schrijver singed graphs}\label{sec:BalColKneser}
	
	For $i \in [n]$, let $\B_{i}(n,k)=\{A \in \binom{[n]}{\pm k }: A \cap \{i, -i\} \ne \emptyset\}$. Observe that $\B_{i}(n,k)$  is a balanced set in $\HSK(n,k)$. Furthermore, any collection of $n-k+1$ of these sets covers all the vertices of $\HSK(n,k)$, resulting in an $(n-k+1)$-colouring of $\HSK(n,k)$.
    Hence $\chi_b(\HSK(n,k)) \le n-k+1$.
    We shall prove that $\chi_b(\HSS(n,k)) \ge n-k+1$, which would imply that $\chi_b(\HSS(n,k)) = \chi_b(\HSK(n,k)) = n-k+1$.   
    Nevertheless, one can derive the lower bound $\chi_b(\HSK(n,k)) \ge n-k+1$ easily from the   (classic) chromatic number of Schrijver graphs.

	{\bf Proof of the lower bound for Theorem \ref{thm-signedK}}: We order the elements of $\pm [n]$ in cyclic order as $(1,-1,2, -2, \ldots, n, -n)$. Then every stable $k$-subset of $\pm [n]$ with respect to this order is, in particular, a signed $k$-subset of $[n]$, and hence is a vertex of $\KS(n,k)$. In other words, every vertex $A$ of $S(2n,k)$ has an associated vertex $f(A)$ in $\KS(n,k)$.
	
	Two vertices $A$ and $B$ are joined by an edge in $S(2n,k)$ if they are disjoint. Hence $f(A)$ and $f(B)$ are adjacent by a negative edge in $\KS(n,k)$. Thus $(S(2n,k), -)$ is a subgraph of $\KS(n,k)$. It follows from Proposition~\ref{prop:Xb_NegativeGraph} that $2n-2k+2 = \chi(S(2n,k)) \leq  2\chi_b(\KS(n,k))$. Hence, $\chi_b(\KS(n,k))\ge n-k+1$.

	\subsection{Proof of Theorem \ref{thm:signedS}}
	
	 Observe that the only alternating $k$-sets contained in $A \cup -A$ are $A$ and $-A$ themselves. Therefore,  the collection   $\{\B_i(n,k): \{i, -i\} \cap A = \emptyset\}$ of $n-k$ balanced sets  covers all vertices of $\HSS(n,k)$ except $A$. Hence 
     $$\chi_b(\HSS(n,k)-A) \le  (n-k).$$
	
	It remains to show that $\chi_b(\HSS(n,k)) \ge n-k+1$. 
	
	Let $S^d:=\{x\in \R^{d+1}: \Vert x\Vert_2=1\}$ be the $d$-dimensional sphere. We say a subset $C\subseteq S^d$ is \emph{(antipodally) symmetric} if $-C=C$. We need the following form of Ky Fan's theorem, see \cite{ST06} and references therein.

	\begin{theorem}\label{thm:KyFan}
		Let $\A$ be a system of open (or a finite system of closed) subsets of $S^d$ such that $A\cap (-A)=\emptyset$ for every element $A$ of $\A$, and $\bigcup_{A\in \A}(A\cup -A)=S^d$. For any linear order $<$ on $\A$ there are elements $A_1 <A_2 <\ldots<A_{d+1}$ of $\A$ and a point $x\in S^d$ such that $x\in \bigcap_{i=1}^{d+1} (-1)^i A_i$.
	\end{theorem}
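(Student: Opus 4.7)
The plan is to derive Theorem~\ref{thm:KyFan} from Fan's combinatorial lemma via a standard discretization and limit argument. Fan's combinatorial lemma states: on any centrally symmetric triangulation $T$ of $S^d$, if $\lambda\colon V(T)\to\{\pm 1,\pm 2,\ldots,\pm m\}$ is an antipodal vertex labeling with no \emph{complementary edge} (no edge $uv$ with $\lambda(u)=-\lambda(v)$), then some $d$-simplex of $T$ has vertex labels $-j_1,+j_2,-j_3,\ldots,(-1)^{d+1}j_{d+1}$ for some $j_1<j_2<\ldots<j_{d+1}$. I would invoke this as a black box; it is the topological heart of the argument, and is typically proved by an inductive parity/cascade count of Tucker type.

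First I would reduce to the case of a finite family of closed sets. The closed case already has finiteness built in. For the open case, compactness of $S^d$ extracts a finite subcover; the standard shrinking lemma then lets one replace each open $A\in\A$ by a closed $A'\subseteq A$ so that $\{A'\cup -A'\}$ still covers $S^d$ and $A'\cap(-A')=\emptyset$ is preserved; the linear order is inherited. Enumerate the resulting finite family as $A_1<A_2<\ldots<A_m$.

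Next I define the key antipodal labeling. For $x\in S^d$, set $i(x)=\min\{i:x\in A_i\cup -A_i\}$ and
$$
\lambda(x)=
\begin{cases}
+\,i(x), & x\in A_{i(x)},\\
-\,i(x), & x\in -A_{i(x)}.
\end{cases}
$$
The hypothesis $A_i\cap(-A_i)=\emptyset$ makes $\lambda$ well defined, and $\lambda(-x)=-\lambda(x)$ is immediate. The level sets $\lambda^{-1}(+i)\subseteq A_i$ and $\lambda^{-1}(-i)\subseteq -A_i$ sit inside disjoint closed sets, separated by a positive distance $\delta_i:=\dist(A_i,-A_i)>0$.

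Now take a sequence $T_n$ of centrally symmetric triangulations of $S^d$ with mesh tending to $0$. Once the mesh of $T_n$ drops below $\min_i \delta_i$, the restriction of $\lambda$ to $V(T_n)$ has no complementary edge, so Fan's combinatorial lemma produces a $d$-simplex $\sigma_n$ with labels $(-1)^i j_i^{(n)}$, $j_1^{(n)}<\ldots<j_{d+1}^{(n)}$, each vertex labeled $(-1)^i j_i^{(n)}$ lying in $(-1)^i A_{j_i^{(n)}}$. Since the tuples range over the finite set $\binom{[m]}{d+1}$, a subsequence shares a common tuple $(j_1,\ldots,j_{d+1})$; passing to a further subsequence and using compactness of $S^d$, all $d+1$ vertices of $\sigma_n$ converge to a common point $x\in S^d$, and closedness of each $(-1)^i A_{j_i}$ forces $x\in\bigcap_{i=1}^{d+1}(-1)^iA_{j_i}$. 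Relabelling $A_{j_i}$ as $A_i$ gives the desired conclusion. The main obstacle is Fan's combinatorial lemma itself, whose proof is nontrivial; given it, the present discretization-and-limit argument is essentially routine, with the only delicate point being the open-to-closed shrinking step.
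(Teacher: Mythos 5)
Your proposal is correct, but note that the paper does not actually prove \Cref{thm:KyFan}: it is imported as a known form of Ky Fan's theorem with a pointer to \cite{ST06} and the references therein, so there is no in-paper argument to compare against. What you have written is essentially the standard derivation of the covering form from Fan's combinatorial (Tucker-type) lemma, and the details check out. The labeling $\lambda(x)=\pm i(x)$ with $i(x)=\min\{i:x\in A_i\cup -A_i\}$ is well defined and antipodal, the positive separation $\dist(A_i,-A_i)>0$ for disjoint closed sets on a compact sphere kills complementary edges once the mesh is small, the sign bookkeeping matches the target $x\in\bigcap_{i=1}^{d+1}(-1)^iA_{j_i}$, and the finiteness of the set of possible index tuples together with shrinking simplex diameters makes the limit step sound. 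The open-to-closed reduction, which you rightly flag as the one delicate point, does go through: after shrinking $\{A\cup -A\}$ to a closed cover and symmetrizing, each piece $G_i\subseteq A_i\cup -A_i$ splits as $G_i\cap A_i=G_i\setminus(-A_i)$, which is closed, giving the required closed family with $A_i'\subseteq A_i$. The only caveat is that the entire topological content is deferred to Fan's combinatorial lemma, invoked as a black box; since the paper itself treats the full theorem as a citation, that is a reasonable division of labour, but your write-up is a proof only modulo that lemma.
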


        
        A subset $X$ of $S^d$ is  \emph{disconnected} if there are disjoint open sets $A$ and $B$ of $S^d$ such that $X \subseteq A\cup B$ and $  A \cap X \ne \emptyset$, $  B \cap X \ne \emptyset$. We say $X$ is \emph{connected} if it is not disconnected.
        
        Given a subset $Y$ of $S^d$, a maximal connected subset of $Y$ is called a \emph{connected component} of $Y$, and two points of $Y$ are \emph{connected by $Y$} if they are in the same connected component. Two points $y_1$ and $y_2$ in a subset $Y$ are said to be \emph{path-connected} by $Y$ if there is a continuous mapping $f:[0,1]\to Y$ such that $f(0)=y_1$ and $f(1)=y_2$. It is known \cite{Munkers2000} that if $Y$ is connected and open, then $Y$ is path-connected.
    \begin{theorem}\label{thm:signedBorsukUlam}
		For every open cover $C_1, C_2,\ldots, C_d$ of the sphere $S^d$, where each $C_i$ is an antipodally symmetric set, one of the $C_i$'s connects a pair of antipodal points.
	\end{theorem}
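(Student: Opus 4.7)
The plan is to argue by contradiction using Theorem~\ref{thm:KyFan}. Suppose no $C_i$ connects a pair of antipodal points; that is, for every $i\in[d]$ and every connected component $K$ of $C_i$, one has $-K\neq K$ and hence $K\cap(-K)=\emptyset$. The idea is to split each symmetric open cover element $C_i$ into two disjoint open halves related by the antipodal map, producing a $d$-element family satisfying the hypothesis of Ky Fan's theorem, whose conclusion would require $d+1$ distinct elements.

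For the splitting, I would use that $S^d$ is locally connected, so every connected component of an open subset of $S^d$ is itself open. Let $\{K_\alpha\}$ be the connected components of $C_i$. Since $C_i=-C_i$, the antipodal map permutes these components, and by the contradiction hypothesis it acts without fixed points on the set of components. Hence the components pair up as $\{K_\alpha,-K_\alpha\}$ with $K_\alpha\cap(-K_\alpha)=\emptyset$. Picking one component from each pair and taking the union yields an open set $C_i^+\subseteq C_i$ with $C_i^+\cap(-C_i^+)=\emptyset$ and $C_i=C_i^+\cup(-C_i^+)$.

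Now set $\A=\{C_1^+,C_2^+,\ldots,C_d^+\}$ and equip it with any linear order. Each $C_i^+$ is open and satisfies $C_i^+\cap(-C_i^+)=\emptyset$. Moreover, every $x\in S^d$ belongs to some $C_i=C_i^+\cup(-C_i^+)$, so $\bigcup_{A\in\A}(A\cup -A)=S^d$. Therefore $\A$ meets the hypotheses of Theorem~\ref{thm:KyFan}, which then produces elements $A_1<A_2<\cdots<A_{d+1}$ of $\A$. This is impossible since $|\A|\le d<d+1$, contradicting the initial assumption and completing the proof.

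The step I expect to be most delicate is the splitting itself: it relies on local connectedness of $S^d$ to ensure that connected components of the open set $C_i$ are open (so that $C_i^+$ is indeed open), and on the contradiction hypothesis to rule out components fixed by the antipodal map. Once this is in place, the rest is a clean invocation of Ky Fan's theorem, read contrapositively as a lower bound on the number of antipodally symmetric open sets needed to cover $S^d$ without any of them connecting a pair of antipodal points.
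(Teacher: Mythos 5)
Your proposal is correct and follows essentially the same route as the paper: both proofs pass to the connected components of the $C_i$'s (using that these are open, and that under the contradiction hypothesis each component $K$ satisfies $K\cap(-K)=\emptyset$) and then invoke \Cref{thm:KyFan}. The only difference is in the final bookkeeping: the paper feeds \emph{all} components into the Ky Fan family and derives the contradiction by pigeonhole among the resulting $d+1$ sets, whereas you first merge each antipodal pair of components into a single ``half'' $C_i^+$, so the family has at most $d$ members and the contradiction with the required $d+1$ distinct elements is immediate — a clean and, if anything, slightly tidier packaging of the same argument.
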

	\begin{proof}
		
		Let $\C_i$ be the collection of connected components of $C_i$. Assume $X \in \C_i$. Since  $C_i$ is symmetric,   $-X\in \C_i$. If $X\cap (-X) \neq \emptyset$ (and hence $X=-X$), then we are done. Thus we may assume for $X\cap (-X) = \emptyset$ for each element $X$ of $\C_i$.
		
		Let $\A=\bigcup_{1\leq i \leq d} \C_i$. Then $\A$ satisfies all the conditions of \Cref{thm:KyFan}.
		
	    Thus there are distinct sets $X_1, X_2, \ldots X_{d+1}$ in $\A$ and a point $x$ such that  $$x\in \bigcap_{l=1}^{d+1}(-1)^lX_l.$$
	    
	    By the pigeonhole principle, two of these sets are in the same $\C_i$, leading to a contradiction. 	\end{proof}

	Now we prove a Gale-Schrijver type theorem regarding the existence of a well-distributed arrangement of our ground set $\pm [n]$ into $S^d$.
	
	\begin{theorem}\label{thm:signedGaleSchrijver}
		There is an embedding of $\pm[n]$ in the sphere $S^{n-k}$, such that the images of $i$ and $-i$ are antipodal for each $i\in [n]$, and any open hemisphere contains an alternating $k$-set.
	\end{theorem}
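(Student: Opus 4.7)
The plan is to construct an explicit embedding via an odd moment curve twisted by a parity factor $(-1)^i$, and then reduce the existence of an alternating $k$-set in every open hemisphere to a statement about sign changes of a univariate polynomial. Concretely, I would define $g:\R\to\R^{n-k+1}$ by $g(t)=(t,t^3,t^5,\ldots,t^{2(n-k)+1})$, set $\bar g(t)=g(t)/\|g(t)\|$, and put $v_i=(-1)^i\bar g(i)$ for $i\in[n]$, with $v_{-i}=-v_i$. Since $g$ is odd, the assignment is antipodally consistent ($v_{-i}=-v_i$ is compatible with evaluating the formula at $-i$), and injectivity of $\{v_j:j\in\pm[n]\}$ follows from standard properties of the moment curve.

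Next, for $x\in S^{n-k}$ write $p_x(t)=\langle x,g(t)\rangle=\sum_{j=1}^{n-k+1}x_j t^{2j-1}=t\,q(t^2)$, where $q(u)=\sum_{j=1}^{n-k+1}x_j u^{j-1}$ has degree at most $n-k$. For generic $x$ (with $\langle x,v_i\rangle\ne 0$ for every $i$), set $\epsilon_i=\mathrm{sign}\langle x,v_i\rangle$, so the labels in $H_x$ form the signed set $\{\epsilon_i\cdot i:i\in[n]\}$. Finding an alternating $k$-subset of this set is equivalent to choosing indices $a_1<\cdots<a_k$ at which $\epsilon_{a_j}$ alternates in sign, and the length of the longest such subsequence is exactly the number of maximal constant runs of $(\epsilon_1,\ldots,\epsilon_n)$, so it suffices to produce at least $k$ runs. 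The crucial identity is $\epsilon_i=(-1)^i\tilde\epsilon_i$ with $\tilde\epsilon_i=\mathrm{sign}\,q(i^2)$: a sign change of $\epsilon$ between positions $i$ and $i+1$ occurs precisely when $\tilde\epsilon$ does \emph{not} change there, and so the numbers of sign changes of $\epsilon$ and of $\tilde\epsilon$ sum to $n-1$. Since $\deg q\le n-k$, $(\tilde\epsilon_i)_{i=1}^n$ has at most $n-k$ sign changes, so $(\epsilon_i)$ has at least $k-1$ sign changes, hence at least $k$ runs, producing the desired alternating $k$-set.

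For non-generic $x$, a nearby generic $x'$ yields an alternating $k$-set $A\subseteq H_{x'}$; since there are only finitely many alternating $k$-sets and each condition $A\subseteq H_x$ is open, a limiting argument — choosing the perturbation so as to avoid the finitely many hyperplanes $\{y:\langle y,v_i\rangle=0\}$ for indices $i$ appearing in a fixed $A$ — delivers an alternating $k$-set contained in the original $H_x$. The heart of the argument, and the step most likely to be the main obstacle at first, is spotting the parity twist in the construction: the odd moment curve by itself provides only an \emph{upper} bound on the number of sign changes of $\tilde\epsilon$, whereas what is needed is a \emph{lower} bound on the sign changes of $\epsilon$. Inserting the factor $(-1)^i$ is precisely what exchanges these two bounds, and once this device is in hand the rest is routine polynomial analysis.
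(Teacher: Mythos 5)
Your construction is the same as the paper's (the parity-twisted odd moment curve $v_i=(-1)^i(i,i^3,\ldots,i^{2(n-k)+1})$, normalized), and your generic-case analysis is correct: the complementarity between sign changes of $\epsilon_i=(-1)^i\,\mathrm{sign}\,q(i^2)$ and of $\mathrm{sign}\,q(i^2)$, together with $\deg q\le n-k$, does force at least $k$ runs and hence an alternating $k$-set in $H_x$. This is essentially the paper's root-parity argument packaged as a sign-change count, and for generic $x$ it is complete.

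The gap is in the non-generic case, and it is a real one. If some $v_j$ lies on the hyperplane $h_x$, your limiting argument only shows that a fixed alternating $k$-set $A$ recurring along a sequence of generic $x_m\to x$ satisfies $\langle x, v_j\rangle\ge 0$ for $j\in A$, i.e.\ containment in the \emph{closed} hemisphere, which is not what the theorem asserts and not what the covering argument downstream needs. You cannot repair this by perturbing all boundary points to the negative side, because they occur in antipodal pairs $v_j,-v_j$: any perturbation that pushes one out pushes the other in. Your parenthetical fix (``avoid the hyperplanes for indices appearing in a fixed $A$'') is circular, since $A$ is produced by the perturbation you are trying to choose. The paper avoids the issue by moving the hyperplane the \emph{other} way, onto a full set of $d=n-k$ antipodal pairs of the $v_i$, after which the remaining $2k$ points split $k$--$k$ and the positive side is shown to be alternating; the points that end up strictly positive were strictly positive to begin with. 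Alternatively, your own argument extends: letting $Z=\{i:\langle x,v_i\rangle=0\}$ (so $|Z|\le n-k$ by the moment-curve independence), each index of $Z$ contributes a root of $q$, and redoing the sign-change count over consecutive indices of $[n]\setminus Z$ (with gaps $g$, charging $g$ roots to a non-change and $g-1$ to a change) again yields at least $k-1$ sign changes among the indices whose images lie strictly off $h_x$, hence an alternating $k$-set inside the open hemisphere. Either repair is needed; as written the degenerate case is not proved.
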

	
	\begin{proof}
		Let $d=n-k$. We first embed $\pm[n]$ into $\R^{d+1}$ with the assistance of the \emph{odd moment curve}. More precisely, let $$v_i=(-1)^i(i,i^3,\ldots,i^{2d+1})\in \R^{d+1}$$
		for each $i\in \pm[n]$ and let $V=\{v_i:i\in \pm[n]\}$. By the definition $v_{-i}=-v_i$ for all $i$. Let $V^+=\{v_{i}: i\in [n]\}$. By a property of the moment curve, no hyperplane that passes through the origin intersects $V^{+}$ in more than $d$ point (see Lemma~1.6.4 of \cite{M03}). 
		
		We now claim that the mapping $i\mapsto w_i=v_i/|v_i|$ is the desired embedding of $\pm[n]$ in the sphere $S^{d}$.
		
		Let $a=(a_1,a_2,\ldots,a_{d+1})\in S^d$. The hyperplane $h_a=\{x\in \R^{d+1}:x\cdot a=0\}$ passing through the origin and perpendicular to $a$ partitions $\R^{d+1}$ into three regions, namely $h_a$, $h_a^+=\{x\in \R^{d+1}:x\cdot a>0\}$, and $h_a^-=\{x\in \R^{d+1}: x\cdot a<0\}$.
		The open hemisphere centered at $a$ is $H_a=S^d\cap h_a^+$. We shall find an alternating $k$-set $X\in \A(n,k)$ whose image $\{w_i:i\in X\}$ is contained in $H_a$, equivalently $\{v_i:i\in X\} \subset h_a^+$.
		
		To do so, we first continuously move the vector $a\in S^{d}$ to increase the number of points of $V$ contained in the hyperplane $h_a$ while no points of $V$ get swept through by $h_a$, i.e., each $v_i$ stays in $h_a\cup h_a^+$ or $h_a\cup h_a^-$ that it originally belonged to.
		
		Since no $d+1$ points of $V^+=\{v_{i}: i\in [n]\}$ is on $h_a$, and noting that $0\in h_a$, we can do this by gradually increasing (one at a time) the intersection $h_a\cap V^+$ while fixing the subspace generated by the vectors already in $h_a\cap V$, until we reach the vector $a'=(a'_1, a'_2, \ldots, a'_{d+1})$ such that $|h_{a'}\cap V^+|=d$. Furthermore, observe that $v_i\in h_{a'}$ if and only if $v_{-i} \in h_{a'}$, thus $|h_{a'}\cap V|=2d$ at the end of this process.

		Thus, $|V\setminus h_{a'}|=2k$, and, since $V$ is antipodally symmetric about the origin, we must have $|V\cap h_{a'}^+|=|V\cap h_{a'}^-|=k$. The process of obtaining $h_{a'}$ from $h_{a}$ guarantees that $V\cap h_{a'}^+\subseteq V\cap h_a^+$ and $V\cap h_{a'}^-\subseteq V\cap h_a^-$. Hence, to complete the proof, it suffices to show that $V\cap h_{a'}^+$ is the image of an alternating $k$-set. 
		
		Let $p(x)=a'_1x+a'_2x^3+\cdots+a'_{d+1}x^{2d+1}$. By the choice of $a'$, $p(x)$ has $2d+1$ simple roots: $0$ and $d$ pairs of antipodal elements of $\pm[n]$. Observe that $v_i\in h_{a'}^+$ if and only if $(-1)^ip(i)>0$. Hence $X=\{i\in\pm[k+d]:v_i\in h_{a'}^+\}=\{i\in\pm[k+d]:(-1)^ip(i)>0\}$.

To complete the proof it is enough to prove that:
\medskip
 
{\bf Claim.} $X\in\A(n,k)$, that is, $X$ is an alternating $k$-set.
		
			{\bf Proof of Claim.} First of all, since $v_i$ and $v_{-i}$ are on the opposite sides of $h_{a'}$, $X$ does not contain an antipodal pair of indices. Hence $X \in \binom{[n]}{\pm k}$.
			
			To see that $X$ is alternating, suppose, to the contrary, that $i_l$ and $i_{l+1}$ are two indices in $X$ of the same sign with adjacent absolute values, that is, there is no $j\in X$ with $|i_l|<|j|<|i_{l+1}|$. This implies that, all the integers in $(i_l,i_{l+1})$ are (simple) roots of $p(x)$.
			
			If $i_l,i_{l+1}$ are of the same parity, then $p(i_l)$ and $p(i_{l+1})$ are of the same sign. So, the number of roots of $p(x)$ on $(i_l,i_{l+1})$ is even, contradicting the fact that the number of integral points in $(i_l,i_{l+1})$ is odd. Similarly, if $i_l$ and $i_{l+1}$ are of opposite parities, then $p(i_l)$ and $p(i_{l+1})$ are of opposite signs and similarly we get a contradiction. Therefore, $X$ is alternating.
	\end{proof}

{\bf Proof of the lower bound for Theorem~\ref{thm:signedS}.}
		Again, we write $d=n-k$ and suppose, to the contrary, that there is a balanced $d$-colouring $f$ for $\SSG(k+d,k)$. For each $A\in V(\SSG(k+d,k))=\A(k+d,k)$, let $c(A):=\{f(-A),f(A)\}$. 
		
		Arrange $\pm[k+d]$ in $S^d$ as described in Theorem~\ref{thm:signedGaleSchrijver}. For each $i\in[d]$, let
		$$A_i:=\{x\in S^d: \text{there is an alternating $k$-set $X\subset H_x$ with $i\in c(X)$}\}.$$
		The condition on $c$ implies that each $A_i$ is symmetric. From \Cref{thm:signedGaleSchrijver} we conclude that $\bigcup_{1\leq i \leq d} A_i=S^d$. As each $A_i$ is easily observed to be an open set, by \Cref{thm:signedBorsukUlam}, there is an $A_i$ connecting two antipodal points $x_0$ and $-x_0$ of $S^d$. Thus, there exists a (simple) path $\gamma:[0,1]\to S^d$ with $\gamma(0)=x_0, \gamma(1)=-x_0$ such that $\Gamma:=\gamma([0,1]) \subseteq A_i$.
		
		By definition, $x\in A_i$ if and only if there is an alternating $k$-set $X\subset H_x$ with $i\in c(X)$. We denote such a $k$-set by $X_x$ (when there is more than one choice, pick one arbitrarily). Since $H_x$ is an open hemisphere and $X_x$ is a discrete set in $S^d$, there is an $\eta=\eta_x>0$ such that the open neighbourhood $U_x:=\{y\in S^d: \dist(x,y)<\eta\}$ of $x$ satisfies that $X_x\subset H_y$ for all $y\in U_x$, where $\dist(\cdot,\cdot)$ denotes the Euclidean distance in $\mathbb{R}^{d+1}$.
		
		Thus $\{U_x:x\in I\}$ covers $I$ and there exists a finite subcover by compactness. Further, we find a sequence $U_{x_l}, l\in[0,m]$ in this subcover such that $U_{x_l}\cap U_{x_{l+1}}\neq \emptyset$ for all $l\in [0,m-1]$, where $x_{m}:=-x_0$.
		
		We claim that the alternating $k$-sets $X_{x_l}$ and $X_{x_{l+1}}$ are joined by a positive edge. Suppose not, there is an $i_0$ with $i_0\in X_{x_l}$ and $-i_0\in X_{x_{l+1}}$. But since $U_{x_l}\cap U_{x_{l+1}}\neq \emptyset$, by definition, this means that for any $y\in U_{x_l}\cap U_{x_{l+1}}$, $H_y$ contains both the images of $\pm i_0$, which is a contradiction.
		
		Since $X_{x_0}$ and $X_{-x_0}$ are separated by the hyperplane $h_{x_0}$, they have no common element and hence are adjacent with a negative edge in $\KS(n,k)$. Altogether $X_{x_0}, X_{x_1}\ldots,X_{x_{m}}=X_{-x_0}$ give an unbalanced cycle in the colour class $c^{-1}(i)$, a contradiction.\hfill{$\Box$}

	\subsection{A conjecture on the structure of Schrijver signed graphs}
	
	Given a signed graph $(G, \sigma)$ the subgraph of $G$ induced by the set of negative edges is denoted by $(G,\sigma)^-$. The following proposition, proved in \cite{Zaslavsky1987}, connects the balanced chromatic number of a signed graph to the chromatic numbers of subgraphs induced by the set of negative edges among all switchings of it.
	
	\begin{proposition}\label{prop:Xb_NegativeSubgraph}
		For every signed graph $(G,\sigma)$,
		$$\chi_b(G,\sigma)=\min_{\sigma'\equiv\sigma}\chi((G,\sigma')^-).$$
    \end{proposition}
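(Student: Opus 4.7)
The plan is to prove both inequalities separately. For the easy direction, $\chi_b(G,\sigma)\le \chi((G,\sigma')^-)$ for every $\sigma'\equiv\sigma$, fix such a $\sigma'$ and a proper $k$-colouring $f$ of the underlying graph $(G,\sigma')^-$. Each colour class $f^{-1}(i)$ is an independent set in $(G,\sigma')^-$, so the edges of $(G,\sigma')$ induced by $f^{-1}(i)$ are all positive and hence trivially balanced. Since balancedness of an induced subgraph depends only on the parity of negative edges on each of its cycles, and this parity is a switching invariant, the same class $f^{-1}(i)$ remains balanced in $(G,\sigma)$. Thus $f$ is a balanced $k$-colouring of $(G,\sigma)$, which gives the inequality.

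For the reverse direction, suppose $f:V(G)\to[p]$ is a balanced $p$-colouring of $(G,\sigma)$, with $p=\chi_b(G,\sigma)$. For each $i\in[p]$, the induced signed subgraph $(G[f^{-1}(i)],\sigma)$ is balanced, so by Definition~\ref{def-balanced} (together with Harary's characterization) there is a set $X_i\subseteq f^{-1}(i)$ such that switching $X_i$ inside $G[f^{-1}(i)]$ turns every edge of $G[f^{-1}(i)]$ positive. The key step is to glue these local switchings into one global switching: set $X:=\bigcup_{i=1}^p X_i$ and let $\sigma'$ be the signature obtained from $\sigma$ by switching $X$. Because the colour classes $f^{-1}(i)$ are pairwise disjoint, an edge lying inside a single class $f^{-1}(i)$ has exactly one endpoint in $X$ if and only if it has exactly one endpoint in $X_i$; hence inside $f^{-1}(i)$ the effect of switching $X$ coincides with that of switching $X_i$, and all monochromatic edges of $(G,\sigma')$ are positive. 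Equivalently, every negative edge of $(G,\sigma')$ joins two different colour classes, so $f$ is a proper $p$-colouring of $(G,\sigma')^-$. This yields $\chi((G,\sigma')^-)\le p$ with $\sigma'\equiv\sigma$, completing the proof.

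The only real subtlety, and the step I would double-check carefully, is the compatibility of the local switchings $X_i$ when assembled into a global one. What makes it work here is that balancedness of $G[f^{-1}(i)]$ is witnessed by a subset $X_i$ of $f^{-1}(i)$ itself, and the classes are disjoint, so the union $X$ acts on each class independently; bichromatic edges are unaffected by this argument because their signs are irrelevant to the colouring of $(G,\sigma')^-$. No additional machinery beyond Harary's theorem and the switching-invariance of balancedness is needed.
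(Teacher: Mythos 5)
Your proof is correct, and the ``gluing'' step you flag as the subtlety is handled properly: since the $X_i$ live in pairwise disjoint colour classes, switching $X=\bigcup_i X_i$ acts on each $G[f^{-1}(i)]$ exactly as switching $X_i$ does, so every negative edge of $\sigma'$ is bichromatic. The paper does not prove this proposition itself (it cites Zaslavsky), but your two-inequality argument is the standard one that would be expected; the only remark worth making is that the existence of $X_i$ follows already from the paper's definition of balancedness as switching equivalence to the all-positive signature, so Harary's cycle characterization is not actually needed.
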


Recall that $\HSK(n,k)$ and $\HSS(n,k)$ are the subgraphs of $\KS(n,k)$ and $\SSG(n,k)$, respectively, induced by the vertices whose first nonzero element is positive with the signature inherited.  We  observe here that this standard signature is the one for which the equality of \Cref{prop:Xb_NegativeSubgraph} holds. We will need the following notation.

For $i\in [n]$, let $\B_i^+(n,k)=\{A\in \binom{[n]}{\pm k}: i\in A\}$. It is easily observed that $\B_i^+(n,k)$ is an independent set of $\KS(n,k)^-$.

\begin{theorem}\label{thm:negsubgraphHSKHSS}
    For all $n\geq k$, $$\chi(\HSK(n,k)^-)=\chi(\HSS(n,k)^-)=n-k+1.$$
\end{theorem}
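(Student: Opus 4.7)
The plan is to get the upper bound $\chi(\HSK(n,k)^-) \le n-k+1$ from an explicit colouring by ``smallest positive index in $A$'', and the matching lower bound from Proposition~\ref{prop:Xb_NegativeSubgraph} combined with Theorems~\ref{thm-signedK} and \ref{thm:signedS}.

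For the upper bound, I would colour each vertex $A$ of $\HSK(n,k)$ by $c(A) := \min\{i \in [n] : i \in A\}$. The fact that the first nonzero coordinate of $A$ is positive guarantees that $c(A)$ is well defined, and furthermore forces the smallest-magnitude element of $A$ to be $c(A)$ itself; hence all $k$ nonzero coordinates of $A$ lie at positions $\ge c(A)$, yielding $c(A) \le n-k+1$. The colour class of $i$ is contained in $\B_i^+(n,k)$, which is independent in $\KS(n,k)^-$, so $c$ is a proper $(n-k+1)$-colouring of $\HSK(n,k)^-$. Since $\HSS(n,k)$ is an induced subgraph of $\HSK(n,k)$, restriction yields $\chi(\HSS(n,k)^-) \le n-k+1$ at once.

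For the lower bound, Proposition~\ref{prop:Xb_NegativeSubgraph} applied to $\HSS(n,k)$ with its inherited signature gives $\chi(\HSS(n,k)^-) \ge \chi_b(\HSS(n,k))$, and Theorem~\ref{thm:signedS} evaluates the right-hand side as $n-k+1$. Thus we obtain the chain
\[
n-k+1 \le \chi(\HSS(n,k)^-) \le \chi(\HSK(n,k)^-) \le n-k+1,
\]
which forces equality throughout. I do not foresee a genuine obstacle: once the colouring $c$ is checked to take values in $[n-k+1]$ and the earlier theorems supply the lower bound, the whole thing collapses into a one-line sandwich. The only small points of care are to verify cleanly that the $\B_i^+(n,k)$ are independent in $\KS(n,k)^-$ (immediate, since any two of their members share the element $i$), and that the ``first nonzero positive'' hypothesis really does equate the smallest $|a|$ over $a\in A$ with the smallest $i\in[n]$ lying in $A$, which is the step that guarantees $c(A)\le n-k+1$.
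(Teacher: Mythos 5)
Your proposal is correct and matches the paper's proof: the upper bound is the same covering of $\HSK(n,k)$ by the independent sets $\B_i^+(n,k)$ for $i\le n-k+1$ (your colouring by the smallest positive element of $A$ is just an explicit way of assigning each vertex to the first such set containing it), and the lower bound is the same appeal to Proposition~\ref{prop:Xb_NegativeSubgraph} together with Theorem~\ref{thm:signedS}.
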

\begin{proof}
    The lower bound follows from \Cref{prop:Xb_NegativeSubgraph}, \Cref{thm-signedK}, and \Cref{thm:signedS}. Hence it is enough to give an $(n-k+1)$-colouring for $\HSK(n,k)^-$ (hence also for $\HSS(n,k)^-$). To that end, we observe that $\B_i^+(n,k)$ for $i=1,2,\ldots, n-k+1$ covers all vertices of $\HSK(n,k)$ because the first nonzero element of each vertex is positive.
\end{proof}

Now, we turn to the colouring of $\KS(n,k)^-$ and $\SSG(n,k)^-$. Since $\KS(n,k)$ and $\SSG(n,k)$ each contains two copies of $\HSK(n,k)$ and $\HSS(n,k)$ respectively, the upper bound for the chromatic number of their negative subgraphs is $2n-2k+2$. We show that $\KS(n,k)$ reaches this bound while $\SSG(n,k)$ does not.

\begin{theorem}\label{thm:negsubgraphSKSS}
    For all $n\geq k$, $$\chi(\KS(n,k)^-)=2n-2k+2,$$
    $$\chi(\SSG(n,k)^-)=n-k+2.$$
\end{theorem}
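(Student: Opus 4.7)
The plan is to address the upper and lower bounds separately for each of the two formulas, using the families $\B_i^+(n,k)$ for the upper bounds and topological tools for the matching lower bounds. The lower bound for $\KS(n,k)^-$ comes from embedding a classical Schrijver graph, while the lower bound for $\SSG(n,k)^-$ is the crux and uses the Gale--Schrijver arrangement of Theorem~\ref{thm:signedGaleSchrijver} together with the Lyusternik--Shnirelman theorem.

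For the upper bounds, each family $\B_i^+(n,k)$ is independent in $\KS(n,k)^-$ since all of its members share $i$. Taking $i\in\pm[n-k+1]$ gives $2n-2k+2$ independent sets covering $V(\KS(n,k))$, because every signed $k$-subset $A$ has its element of smallest absolute value, say $\epsilon_1 a_1$, with $a_1\leq n-k+1$, so $A\in \B_{\epsilon_1 a_1}^+(n,k)$. For $\SSG(n,k)^-$ I would restrict to $\B_i^+(n,k)\cap\A(n,k)$ and take only $i\in[n-k+2]$: by the definition of an alternating $k$-set, every $A\in\A(n,k)$ contains a positive element of absolute value at most $n-k+2$ (namely $a_1\leq n-k+1$ if $A$ starts positive, or $a_2\leq n-k+2$ if $A$ starts negative), so $A$ lies in one of these $n-k+2$ families.

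For the lower bound on $\KS(n,k)^-$, I would reuse the cyclic order $(1,-1,2,-2,\ldots,n,-n)$ on $\pm[n]$ introduced in the proof of the lower bound of Theorem~\ref{thm-signedK}. With respect to this order every stable $k$-subset of $[2n]$ is automatically a signed $k$-subset of $[n]$, and two disjoint stable subsets give rise to a negative edge of $\KS(n,k)$; hence $S(2n,k)$ is a subgraph of $\KS(n,k)^-$, and Schrijver's theorem gives $\chi(\KS(n,k)^-)\geq\chi(S(2n,k))=2n-2k+2$.

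For the lower bound on $\SSG(n,k)^-$, which is the main step, I would argue by contradiction. Assume $f$ is a proper $(n-k+1)$-colouring of $\SSG(n,k)^-$, so that any two disjoint alternating $k$-sets receive distinct colours. Embedding $\pm[n]$ into $S^{n-k}$ via Theorem~\ref{thm:signedGaleSchrijver}, for each colour $i\in[n-k+1]$ define
\[
B_i = \{x\in S^{n-k}: \exists\, X\in\A(n,k)\text{ with }X\subset H_x\text{ and }f(X)=i\}.
\]
Each $B_i$ is open, since a finite set $X\subset H_x$ remains inside $H_y$ for every $y$ sufficiently close to $x$, and Theorem~\ref{thm:signedGaleSchrijver} guarantees $\bigcup_i B_i=S^{n-k}$. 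I thus have an open cover of $S^{n-k}$ by $n-k+1=(n-k)+1$ sets, so by the Lyusternik--Shnirelman theorem (a corollary of Borsuk--Ulam) some $B_i$ contains an antipodal pair $x,-x$. Witnesses $X\subset H_x$ and $Y\subset H_{-x}$ with $f(X)=f(Y)=i$ then satisfy $X\cap Y=\emptyset$, since $H_x\cap H_{-x}=\emptyset$, yielding a monochromatic negative edge and contradicting properness of $f$. The main obstacle is choosing the right topological input: the Ky~Fan argument used in Theorem~\ref{thm:signedS} only delivers $\chi\geq n-k+1$, whereas Lyusternik--Shnirelman provides exactly the extra unit that distinguishes $\chi(\SSG(n,k)^-)$ from $\chi_b(\SSG(n,k))$.
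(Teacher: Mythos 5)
Your proposal follows essentially the same route as the paper: upper bounds via the independent sets $\B_i^+(n,k)$, the lower bound for $\KS(n,k)^-$ via the embedded copy of $S(2n,k)$ under the cyclic order $(1,-1,2,-2,\ldots,n,-n)$, and the lower bound for $\SSG(n,k)^-$ via the embedding of Theorem~\ref{thm:signedGaleSchrijver} together with the covering form of the Borsuk--Ulam theorem (the paper's Theorem~\ref{thm:Borsuk-Ulam}, which is the same statement you call Lyusternik--Shnirelman). The only cosmetic difference is that you colour $\KS(n,k)^-$ directly with the $2(n-k+1)$ sets $\B_i^+$, $i\in\pm[n-k+1]$, where the paper instead appeals to the decomposition into two switched copies of $\HSK(n,k)$; note that your justification of the $(n-k+2)$-colouring of $\SSG(n,k)^-$ (like the paper's) tacitly needs $k\ge 2$, since an alternating $1$-set of the form $\{-a_1\}$ contains no positive element.
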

\begin{proof}
    The first claim is clear once we recall from the proof of \Cref{thm-signedK} that $\KS(n,k)^-$ contains $S(2n,k)$ as a subgraph.

    To see the second part, first notice that $\B_i^+(n,k),$ $i=1,2,\ldots, n-k+2$ is an $(n-k+2)$-colouring of $\SSG(n,k)^-$, establishing the upper bound.
    
    The lower bound is already implicitly proved along the way of proving \Cref{thm:signedS}, so we give a sketch of it. Using \Cref{thm:signedGaleSchrijver}, we embed $\pm[n]$ in $S^{n-k}$ in such a way that $i$ and $-i$ are antipodal for each $i\in [n]$, and any open hemisphere contains an alternating $k$-set. 

    Suppose there is an $(n-k+1)$-colouring $c$ of $\SSG(n,k)^-$, let 
    $$A_i:=\{x\in S^{n-k}: \exists X\in \A(n,k), X \subset H_x, c(X)=i\}$$
    for $i\in [n-k+1]$.

    Since each hemisphere contains an alternating $k$-set, $A_1,A_2,\ldots,A_{n-k+1}$ gives an open cover of $S^{n-k}$. By the Borsuk-Ulam theorem (cf. \Cref{thm:Borsuk-Ulam}), there is an $A_i$ that contains a pair of antipodal points of $S^{n-k}$. However, this gives a pair of disjoint alternating $k$-sets in the colour class $c^{-1}(i)$. A contradiction.

\end{proof}

    By \Cref{prop:Xb_NegativeSubgraph}, Theorem \ref{thm:signedS} is equivalent to saying that for any switching of $\HSS(n,k)$, the set of negative edges induces a graph of chromatic number at least $n-k+1$. Nevertheless, it seems that all these induced subgraphs are highly structured. This is presented in the following conjecture.

	\begin{conjecture}\label{conj:signedSchrijverContainsNegativeSchrijver}
		In any switching equivalent copy of $\HSS(n,k)$, the graph induced by the set of negative edges contains $S(n-1,k/2)$ as a subgraph when $k$ is even and $S(n,(k+1)/2)$ when $k$ is odd.
	\end{conjecture}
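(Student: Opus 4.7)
The plan is to first reformulate \Cref{conj:signedSchrijverContainsNegativeSchrijver} combinatorially and then attack it with a combination of an explicit embedding and a topological argument. Each switching equivalent copy of $\HSS(n,k)$ corresponds to a choice of representative $A' = \epsilon_A A \in \{A, -A\}$ for every $A \in \A(n,k)$, and, as in the remark preceding \Cref{thm-signedK}, the negative subgraph of the switched copy becomes the disjointness graph on this system of representatives: a pair $\{A, B\}$ is a (negative) edge iff $A' \cap B' = \emptyset$ as signed subsets of $\pm[n]$. Thus the conjecture is equivalent to the statement that for every system $R$ of representatives of the antipodal pairs in $\A(n,k)$, the disjointness graph on $R$ contains $S(n-1,k/2)$ (when $k$ is even) or $S(n,(k+1)/2)$ (when $k$ is odd) as a subgraph.

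For the identity switching I would verify the conjecture by exhibiting an explicit embedding. When $k = 2m$ is even, map each stable $m$-subset $T = \{t_1 < \cdots < t_m\}$ of $[n-1]$ to
\[
f(T) = \{t_1, -(t_1+1), t_2, -(t_2+1), \ldots, t_m, -(t_m+1)\}.
\]
Stability gives $t_i + 1 < t_{i+1}$, so the coordinates of $f(T)$ are strictly increasing in absolute value; combined with the alternating signs this places $f(T)$ in $\A(n,k)$ of first form, i.e.\ a vertex of $\HSS(n,k)$. A direct check shows $f(T) \cap f(T') = \emptyset$ iff $T \cap T' = \emptyset$, so $f$ embeds $S(n-1,m)$ into the negative subgraph under the original signature. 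An analogous construction (appending the extra positive element $t_{m+1}$) handles $k = 2m+1$.

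Extending this to arbitrary switchings is the main challenge. The naive adaptation $\tilde f(T) = \pm f(T) \in R$ can fail: if $\tilde f(T) = f(T)$ but $\tilde f(T') = -f(T')$, the condition $\tilde f(T) \cap \tilde f(T') = \emptyset$ demands that $T \cup T'$ contain no pair of consecutive integers, which many Schrijver-adjacent pairs violate. My plan to circumvent this is to adopt a topological approach in the spirit of the proof of \Cref{thm:signedS}: embed $\pm[n]$ into $S^{n-k}$ via \Cref{thm:signedGaleSchrijver}, and for each $x$ on an equatorial subsphere $S^{n-k-1} \subset S^{n-k}$ select an alternating $k$-set $X_x \in R$ contained in $H_x$, exploiting the freedom in the choice of $X_x$ to ensure membership in $R$. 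A compactness and connectivity argument analogous to that in the proof of \Cref{thm:signedS} (via \Cref{thm:signedBorsukUlam} and \Cref{thm:KyFan}) should then extract a copy of the target Schrijver graph inside the disjointness graph on $R$. The main obstacle is that the existing topological techniques (including those in the proofs of \Cref{thm:signedS} and \Cref{thm:negsubgraphSKSS}) yield only chromatic-number lower bounds rather than the rigid structural containment required here; closing this gap will likely require a finer $\Z_2$-index argument or an induction on $k$ based on a link analysis at a chosen element of $\pm[n]$, with the base cases $k \in \{1,2\}$ handled by direct combinatorial analysis.
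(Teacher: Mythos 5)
This statement is labelled as a \emph{conjecture} in the paper, and the paper does not prove it: the authors only verify it for $k=1$, $k=n-1$, $k=n$ (stated without proof) and for $k=2$ (Theorem~\ref{thm:sinedSchrijverCase2}, via a reformulation as a matching problem in a bipartite graph and K\H{o}nig's theorem). Your proposal likewise does not constitute a proof. What you do establish correctly is (i) the combinatorial reformulation — a switching corresponds to a choice of representative $A'\in\{A,-A\}$ for each vertex, and the negative subgraph becomes the disjointness graph on these representatives — and (ii) the identity-switching case via the explicit embedding $T\mapsto\{t_1,-(t_1+1),\ldots,t_m,-(t_m+1)\}$, which is a valid copy of $S(n-1,k/2)$ in $\HSS(n,k)^-$ (and indeed matches the upper-bound colouring counts in Theorem~\ref{thm:negsubgraphHSKHSS}). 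But the entire content of the conjecture lies in handling \emph{arbitrary} switchings, and there your text is a research plan, not an argument: you yourself note that the topological machinery of Theorems~\ref{thm:signedS}, \ref{thm:signedBorsukUlam} and \ref{thm:KyFan} only yields chromatic lower bounds, whereas the conjecture demands a rigid subgraph containment, and you defer the resolution to an unspecified ``finer $\Z_2$-index argument or an induction on $k$.'' That is precisely the open gap, and nothing in the proposal closes it.

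Two further concrete remarks. First, your deferred ``base cases $k\in\{1,2\}$ handled by direct combinatorial analysis'' is where the paper's only nontrivial progress lives: for $k=2$ one models the vertices of $\HSS(n,2)$ as the edges $\{i,-j\}$, $i<j$, of a bipartite graph on $[n]\cup(-[n])$, observes that switching a vertex replaces the edge $\{i,-j\}$ by $\{j,-i\}$ (preserving $d(i)+d(-i)=n-1$), and applies K\H{o}nig's theorem to extract a matching of size $n-1$, i.e.\ a negative clique $K_{n-1}=S(n-1,1)$. This is elementary and entirely different from the topological route you sketch; if you intend to pursue the conjecture, it suggests that the right tool may be combinatorial (degree/covering arguments robust under the edge-flipping operation) rather than Borsuk--Ulam-type, since the latter does not distinguish between different switchings of the same signed graph. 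Second, a small imprecision: the hemisphere-selection idea ``select $X_x\in R$ contained in $H_x$'' already fails at the first step, because Theorem~\ref{thm:signedGaleSchrijver} only guarantees that \emph{some} alternating $k$-set lies in $H_x$, with no control over which representative of the antipodal pair it is; $H_x$ and $H_{-x}$ contain $X$ and $-X$ respectively, so one of the two hemispheres is forced to miss $R$ unless extra sets happen to be available.
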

	
	As $\chi(S(n-1,k/2)) =\chi(S(n, (k+1)/2)) = n-k+1$, Conjecture \ref{conj:signedSchrijverContainsNegativeSchrijver} would imply that $\chi_b(\HSS(n,k)) \ge n-k+1$. It can be easily verified that Conjecture~\ref{conj:signedSchrijverContainsNegativeSchrijver} holds for $k=1,n-1,$ and $n$. Next, we prove that it holds when $k=2$.

	\begin{theorem}\label{thm:sinedSchrijverCase2}
		Any switching equivalent copy of $\HSS(n,2)$ contains $(S(n-1,1),-)$ as a subgraph.
	\end{theorem}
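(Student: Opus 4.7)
My plan is to reduce the statement to R\'edei's theorem, which says every tournament contains a directed Hamiltonian path. Fix any switching $\sigma'$ of $\HSS(n,2)$; let $X$ denote the set of vertices that have been switched and put $\chi(v)=1$ if $v\in X$, and $\chi(v)=0$ otherwise. I identify vertices of $\HSS(n,2)$ with unordered pairs $\{a,b\}\subseteq[n]$, and my goal is to produce $n-1$ such vertices so that between any two of them there is a negative edge in $\sigma'$; that yields the desired $(S(n-1,1),-)=(K_{n-1},-)$ subgraph.

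The construction I have in mind starts from any permutation $\pi=(\pi_1,\ldots,\pi_n)$ of $[n]$ and takes $v_i=\{\pi_i,\pi_{i+1}\}$ for $i=1,\ldots,n-1$. For two non-consecutive $v_i,v_j$ the supports $\{\pi_i,\pi_{i+1}\}$ and $\{\pi_j,\pi_{j+1}\}$ are disjoint, so in $\HSS(n,2)$ the pair is joined by two parallel edges of opposite signs; such parallel pairs survive every switching, so a negative edge is automatically available between them. All remaining constraints therefore concentrate on consecutive pairs $v_{i-1},v_i$ sharing $\pi_i$. A short port analysis (whether $\pi_i$ is the smaller or larger element of each vertex) shows that the unique edge between $v_{i-1}$ and $v_i$ is originally negative precisely when $\pi_i$ is a monotone step of $(\pi_{i-1},\pi_i,\pi_{i+1})$ and originally positive when $\pi_i$ is a local extremum; after switching, this edge is negative in $\sigma'$ exactly when $\chi(v_{i-1})=\chi(v_i)$ in the monotone case and $\chi(v_{i-1})\ne\chi(v_i)$ in the extremum case.

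To package both cases uniformly, I extend $\chi$ to ordered pairs with $a\ne b$ by
\[
\tilde\chi(a,b)=\chi(\{a,b\})\oplus[a>b],
\]
where $[\cdot]$ is the Iverson bracket. A direct XOR computation converts the local constraint into $\tilde\chi(\pi_{i-1},\pi_i)=\tilde\chi(\pi_i,\pi_{i+1})$, i.e.\ $\tilde\chi$ must be constant along the oriented path $\pi_1\to\pi_2\to\cdots\to\pi_n$. Since $\tilde\chi(a,b)\oplus\tilde\chi(b,a)=1$ for every $a\ne b$, the set of ordered pairs with $\tilde\chi(a,b)=0$ is a tournament $T$ on $[n]$, and the task reduces to exhibiting a directed Hamiltonian path of $T$. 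R\'edei's theorem supplies one, and the corresponding $v_1,\ldots,v_{n-1}$ form the desired $(S(n-1,1),-)$.

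The main creative step I foresee is finding this reformulation: realising that choosing the $n-1$ vertices as the edges of a Hamiltonian path of $K_n$ automatically disposes of non-adjacent pairs through parallel edges, and that twisting $\chi$ by the descent indicator bundles the remaining local constraints into a single tournament orientation. Once that bridge is in place, R\'edei's theorem delivers the conclusion immediately, and the argument is uniform in $n$.
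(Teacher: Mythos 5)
Your proof is correct, and it takes a genuinely different route from the paper's. The paper models the vertices of $\HSS(n,2)$ as the edges $\{i,-j\}$, $i<j$, of a bipartite ``staircase'' graph $B$ on $[n]\cup(-[n])$, observes that switching a vertex amounts to replacing the edge $\{i,-j\}$ by $\{j,-i\}$, and that a negative clique corresponds to a matching; it then shows every such modified bipartite graph has a matching of size $n-1$ via K\H{o}nig's theorem, using the invariant $d(i)+d(-i)=n-1$ to bound the minimum vertex cover. You instead commit in advance to a \emph{structured} candidate clique --- the $n-1$ edges of a Hamiltonian path of $K_n$ --- so that non-consecutive pairs are handled for free by the switching-invariant digons, and the only remaining constraints are local ones along the path; the descent twist $\tilde\chi(a,b)=\chi(\{a,b\})\oplus[a>b]$ (whose correctness I verified in all four cases of the port analysis) converts these into the existence of a directed Hamiltonian path in a tournament, which R\'edei's theorem supplies. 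Both arguments ultimately produce $n-1$ pairwise ``disjoint'' signed $2$-sets in the switched signature, but they lean on different classical results: the paper buys an unstructured matching through LP-type duality (K\H{o}nig), while yours buys a path-shaped one through an ordering argument (R\'edei); yours has the mild aesthetic advantage of making the switching function visible as a tournament orientation, while the paper's is perhaps more likely to suggest how one might attack Conjecture~\ref{conj:signedSchrijverContainsNegativeSchrijver} for larger $k$, where the target $S(n-1,k/2)$ is no longer complete and a clique-finding strategy does not directly generalize.
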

\begin{proof}
    Note that   $S(n-1,1)=K_{n-1}$. We need to show that for any switching equivalent copy of $\HSS(n,2)$, its negative subgraph has clique number at least $n-1$. 

    Let $B$ be the bipartite graph with parts $\{1, 2, \ldots, n\}$ and $\{-1, -2, \ldots, -n\}$, where $\{i, -j\}$ is an edge if $i< j$. The vertices of $\HSS(n,2)$ are the edges of $B$, where two vertices (that is, the edges of $B$) are connected by a negative edge if they form a matching. Thus the clique number of the subgraph induced by the negative edges in $\HSS(n,2)$ is the maximum size of a matching in $B$. To switch a vertex $\{i,-j\}$ means to replace this edge by $\{j,-i\}$ in $B$. 
    
    So we need to prove that  for any subset $S$ of $E(B)$,   replacing each edge $\{i,-j\} \in S$ with edge $\{j,-i\}$, the resulting bipartite graph $B'$ has a matching of size $n-1$. By   K\H{o}nig's theorem, it suffices to show that the minimum size of a vertex cover of $B'$ is $n-1$. 
    
    Note that for any $i$,
     $d_B(i)=n-i$ and $d_B(-i) = i-1$. So $d_B(i)+ d_B(-i)=n-1$.
		Replacing edge $\{i,-j\}$  with   edge $\{-i, j\}$  does not change the sum of the degrees of $i$ and $-i$.
        So $d_{B'}(i)+ d_{B'}(-i)=n-1$ for $i \in [n]$.
        
    Let $C$ be a cover of $B'$. If for some $i$, $C \cap \{i,-i\} = \emptyset$,   then all neighbours of $i$ and $-i$ in $B'$ must be in $C$. Thus $|C|\geq n-1$. Otherwise, $C \cap \{i,-i\} \ne \emptyset$ for $i=1,2,\ldots, n$, and hence   $|C|\geq n$.
This completes the proof of Theorem \ref{thm:sinedSchrijverCase2}.
	\end{proof}

	\section{Borsuk signed graphs}\label{Borsuk signed}

  One of the original versions of the Borsuk-Ulam is the following. 
  
  \begin{theorem}\label{thm:Borsuk-Ulam}
      For any open cover $A_1, A_2, \ldots A_{d+1}$ of $S^{d}$, one of the $A_i$'s contains a pair of antipodal points. 
  \end{theorem}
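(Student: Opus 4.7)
My plan is to derive this open-cover (Lusternik--Schnirelmann) form of the Borsuk--Ulam theorem from the more familiar mapping form, which asserts that every continuous map $f\colon S^{d}\to\R^{d}$ identifies some antipodal pair: there exists $x\in S^{d}$ with $f(x)=f(-x)$. The plan is to treat the mapping form as an external black box and reduce the open-cover statement to it by a standard distance-function trick.

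I would argue by contradiction, assuming $A_{1},\ldots,A_{d+1}$ is an open cover of $S^{d}$ with $A_{i}\cap(-A_{i})=\emptyset$ for every $i\in[d+1]$. The key step is to convert membership in the first $d$ pieces of the cover into a continuous $\R^{d}$-valued map. For each $i\in[d]$, I would set $f_{i}(x):=\dist(x,S^{d}\setminus A_{i})$; this is $1$-Lipschitz, and since $A_{i}$ is open, $f_{i}(x)>0$ holds exactly when $x\in A_{i}$. Assembling these into $f=(f_{1},\ldots,f_{d})\colon S^{d}\to\R^{d}$ and applying the mapping form of Borsuk--Ulam would produce a point $x_{0}\in S^{d}$ with $f(x_{0})=f(-x_{0})$.

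From here I would read off the contradiction. For each $i\in[d]$, since $f_{i}(x_{0})=f_{i}(-x_{0})$, either both values are positive, which places $\{x_{0},-x_{0}\}\subseteq A_{i}$ and violates the assumption, or both are zero, placing $x_{0},-x_{0}\notin A_{i}$. The former cannot happen by hypothesis, so the latter holds for every $i\in[d]$; but then the cover property forces both $x_{0}$ and $-x_{0}$ to lie in $A_{d+1}$, again contradicting $A_{d+1}\cap(-A_{d+1})=\emptyset$.

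The main obstacle here is essentially delegated: the substantive topological content is the mapping form of Borsuk--Ulam, which this plan imports as a black box. A self-contained route would replace the black box by a Tucker-lemma style combinatorial argument, or by a direct reduction from the Ky Fan statement (Theorem~\ref{thm:KyFan}) already invoked in this paper. The latter reduction, however, is more delicate than one might hope: applying Ky Fan directly to $\{A_{1},\ldots,A_{d+1}\}$ (assuming no antipodal pair is captured) only yields a single point $x$ lying in alternating members of the cover, and one apparently needs to enlarge the family to $\{A_{i},-A_{i}\}_{i\in[d+1]}$ or invoke a sharper Tucker-type variant to force an actual contradiction.
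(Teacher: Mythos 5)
Your argument is correct, but note that the paper itself gives no proof of this statement: it is introduced with the words ``One of the original versions of the Borsuk--Ulam [theorem] is the following'' and is used as a classical black box (it is the Lusternik--Schnirelmann--Borsuk covering theorem), with the section's closing remark recording that Theorems~\ref{thm:Borsuk-Ulam}, \ref{thm:BorsukColoring}, \ref{thm-signedBorsuk} and \ref{thm:Lusternik-Schnirelmann} are all equivalent. Your reduction to the mapping form via $f_i(x)=\dist(x,S^d\setminus A_i)$ is the standard textbook derivation (it appears, e.g., in Matou\v{s}ek's book \cite{M03}, which the paper cites), and every step checks out: under the contradiction hypothesis $A_i\cap(-A_i)=\emptyset$ no $A_i$ equals $S^d$, so each $f_i$ is a well-defined $1$-Lipschitz function that is positive exactly on the open set $A_i$, and the dichotomy at the coincidence point $x_0$ forces both $x_0$ and $-x_0$ into $A_{d+1}$. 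Delegating to the mapping form is entirely in the spirit of the paper, which freely moves between equivalent formulations rather than proving any of them from scratch. Your closing caveat is also well taken: a direct application of Theorem~\ref{thm:KyFan} to the family $\{A_1,\ldots,A_{d+1}\}$ only produces a point lying in alternating members and does not by itself yield a contradiction, so the Ky Fan route is genuinely less immediate than the distance-function trick you chose.
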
    
  
	Given positive integer $d$ and positive real number $\eps< 2$, the \emph{Borsuk graph} $B(d,\eps)$ has as its vertex set the points of $S^d$, where a pair $x,y$ of points are adjacent if $\dist(x, -y) \le \eps$ (again, $\dist(\cdot,\cdot)$ denotes the Euclidean distance in $\mathbb{R}^{d+1}$). Deciding the chromatic number of Borsuk graph for small values of $\eps$ turned out to be equivalent to the Borsuk-Ulam theorem, see \cite{MR0514625}.
    
    \begin{theorem}[Reformulation of Borsuk-Ulam]\label{thm:BorsukColoring}
	Given $d$, there exists an $\eps_d$ such that for every $\eps \leq \eps_d$ we have $\chi(B(d,\eps))=d+2$. 
	  \end{theorem}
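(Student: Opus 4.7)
\textbf{Proof proposal for Theorem~\ref{thm:BorsukColoring}.}
The plan is to establish the two inequalities $\chi(B(d,\eps))\le d+2$ and $\chi(B(d,\eps))\ge d+2$ separately. The threshold $\eps_d$ is dictated by the upper bound construction, while the lower bound turns out to be valid for every $\eps>0$.

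For the upper bound I would use the classical Lusternik--Schnirelmann style partition induced by a regular simplex. Let $v_1,\ldots,v_{d+2}\in S^d$ be the vertices of a regular $(d+1)$-simplex inscribed in $S^d$, so that $\sum_i v_i=0$. For each $i\in[d+2]$ let
\[
\bar A_i:=\{x\in S^d:\, x\cdot v_i\ge x\cdot v_j \text{ for all } j\in[d+2]\}.
\]
These closed sets cover $S^d$. I would check that $\bar A_i\cap(-\bar A_i)=\emptyset$: if $x\in \bar A_i\cap(-\bar A_i)$, then $x\cdot v_i$ is both the maximum and the minimum of the $(x\cdot v_j)_j$, so this inner product is independent of $j$; combined with $\sum_j v_j=0$ this forces $x\cdot v_j=0$ for all $j$, hence $x=0$, contradicting $x\in S^d$. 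By compactness, $\delta_d:=\min_i \dist(\bar A_i,-\bar A_i)>0$. Assigning to each $x\in S^d$ any index $i$ with $x\in\bar A_i$ (breaking ties arbitrarily) partitions $S^d$ into $d+2$ colour classes, and this yields a proper colouring of $B(d,\eps)$ whenever $\eps<\delta_d$: any monochromatic edge $xy$ would put $x,y$ in the same $\bar A_i$, but then $\dist(x,-y)\le\eps<\delta_d\le\dist(\bar A_i,-\bar A_i)$ is a contradiction. Thus any $\eps_d\in(0,\delta_d)$ works for the upper bound, and $\chi(B(d,\eps))\le d+2$ for all $\eps\le\eps_d$.

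For the lower bound, suppose to the contrary that $c:S^d\to[d+1]$ is a proper colouring of $B(d,\eps)$, and set $A_i:=c^{-1}(i)$. Thicken each colour class to its open $(\eps/2)$-neighbourhood $U_i\subseteq S^d$. Then $\{U_i\}_{i=1}^{d+1}$ is an open cover of $S^d$, so Theorem~\ref{thm:Borsuk-Ulam} provides some $U_i$ containing a pair of antipodal points $x,-x$. Pick witnesses $a,b\in A_i$ with $\dist(a,x)<\eps/2$ and $\dist(b,-x)<\eps/2$. Then
\[
\dist(a,-b)\le \dist(a,x)+\dist(x,-b)<\eps,
\]
so $ab$ is an edge of $B(d,\eps)$ whose endpoints both receive colour $i$, contradicting the fact that $c$ is proper. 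Hence $\chi(B(d,\eps))\ge d+2$ for every $\eps>0$.

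The main obstacle is really just making the constants match up: the lower bound has no threshold, so all the delicate work is in identifying $\delta_d$ for the upper bound, and the regular-simplex construction handles this in one stroke. Combining the two bounds with any $\eps_d\in(0,\delta_d)$ gives $\chi(B(d,\eps))=d+2$ for all $\eps\le\eps_d$.
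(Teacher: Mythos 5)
Your proof is correct. Note that the paper does not actually prove this statement; it is stated as a known reformulation of Borsuk--Ulam with a citation to Lov\'asz, so there is no in-paper argument to compare against. Your two halves are the standard ones and both check out: the regular-simplex cover gives $d+2$ closed sets $\bar A_i$ with $\bar A_i\cap(-\bar A_i)=\emptyset$ (the vertices of a regular simplex centred at the origin linearly span $\R^{d+1}$, so $x\cdot v_j=0$ for all $j$ forces $x=0$), and compactness yields the positive threshold $\delta_d$; the lower bound via thickening the colour classes by $\eps/2$ and invoking \Cref{thm:Borsuk-Ulam} is airtight, including the implicit point that $a\neq b$ since $\dist(a,-a)=2>\eps$. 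It is worth observing that both halves of your argument are exactly the unsigned analogues of what the paper does for the signed Borsuk graph in Section~\ref{Borsuk signed}: the upper bound there uses the coordinate-hyperplane cover of $S^d$ in place of your simplex cover, and your lower bound mirrors the proof of $(b)\Rightarrow(a)$ in \Cref{thm-signedBorsuk}, with ``contains a pair of antipodal points'' playing the role that ``connects a pair of antipodal points'' plays in the signed setting.
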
 
    
	It is mentioned by Lov\'asz in his original proof of Kneser's conjecture that this equivalence has been the motivation behind his work. It was shown in \cite{ST06} that for $d=n-2k$, there is a suitable choice of $\eps$ such that the Borsuk graph $B(d,\eps)$ admits a homomorphism to the Schrjiver graph $S(n,k)$, implying that $\chi(S(n,k)) \ge n-2k+2$.
	
	Following this direction of thought, here we introduce Borsuk signed graphs and present the connection between their chromatic property and various extensions of the Borsuk-Ulam theorem.

    \begin{definition}
    	The \emph{Borsuk signed graph,} $\BS(d,\eps)$, is the signed graph on the vertex set $S^d$, where for any $x,y\in S^d$, there is a positive edge joining $x$ and $y$ if  $\dist(x,y)\leq \eps$, and a negative edge joining them if $\dist(x,-y)\leq \eps.$
    \end{definition}	

 That $\BS(d,\eps)$ admits a balanced $(d+1)$-colouring for a small enough value of $\eps$ can be observed in various ways. One possible colouring is obtained as follows.  For each element $\mathbf{e}_{i}$ of the standard basis of $\R^{d+1}$, let ${E_i}=h_{\mathbf{ e}_i} \cap S^d$ where $h_{\mathbf {e}_i}$ is the hyperplane perpendicular to ${\bf e}_i$ containing the origin. Then for a small enough value of $\eps$ let ${B}_i$ be the subset of $S^d$ obtained by removing an $\eps$-neighbourhood of ${E}_i$. Observe that each $B_i$ is a balanced set and that each point of $S^d$ belongs to at least one $B_i$. So $\BS(d,\eps)$ admits a balanced $(d+1)$-colouring.
 
The following theorem explores the relations between chromatic properties of $\BS(d,\eps)$ and various extensions of the Borsuk-Ulam theorem. In particular, for sufficiently small $\eps$, the balanced chromatic number of $\BS(d, \eps)$ is determined.

    \begin{theorem}\label{thm-signedBorsuk}
    	For every natural number $d$, the following statements are equivalent:
    	\begin{itemize}
    		\item [(a)] There exist an $\eps_d >0$ such that for any $\eps$,  $0<\eps \leq \eps_d$, we have $\chi_b(\BS(d,\eps))=d+1$.
    		    		
    		\item [(b)] (\Cref{thm:signedBorsukUlam}, signed Borsuk-Ulam theorem (open form)) For every symmetric open cover $A_1, A_2,\ldots, A_d$ of $S^d$, one of the ${A_i}$'s connects (hence path-connects) a pair of antipodal points.
    		
    		\item[(c1)] (Signed Borsuk-Ulam theorem (closed form 1)) For every symmetric closed cover $F_1, F_2,\ldots, F_d$ of the sphere $S^d$, there is an $F_i$ such that any open neighbourhood $U$ of $F_i$ connects a pair of antipodal points.
    		
    		\item [(c2)] (Signed Borsuk-Ulam theorem (closed form 2)) For every symmetric closed cover $F_1, F_2,\ldots, F_d$ of $S^d$, one of the $F_i$'s connects a pair of antipodal points.

    	\end{itemize}
    \end{theorem}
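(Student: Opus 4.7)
The plan is to establish the implications (a) $\Leftrightarrow$ (b), (b) $\Rightarrow$ (c2), (c2) $\Rightarrow$ (c1), and (c1) $\Rightarrow$ (b), which together give all the stated equivalences. The implication (c2) $\Rightarrow$ (c1) is immediate, since any connected subset of $F_i$ joining a pair of antipodal points already lies inside every open neighbourhood of $F_i$. Throughout, write $N_\alpha(X):=\{z\in S^d:\dist(z,X)<\alpha\}$ for the open $\alpha$-neighbourhood of $X$.

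For (b) $\Rightarrow$ (a), the upper bound $\chi_b(\BS(d,\eps))\leq d+1$ is already recorded in the paragraph preceding the theorem, so it suffices to prove the lower bound. Given a hypothetical balanced $d$-colouring $c$, for each colour class $V_i:=c^{-1}(i)$ balancedness furnishes a partition $V_i=S_{i,+}\sqcup S_{i,-}$ along which positive edges stay inside a part while negative edges cross it. Setting $T_i:=S_{i,+}\cup(-S_{i,-})$, a short four-case analysis shows $\dist(x,y)>\eps$ for every $x\in T_i$ and $y\in -T_i$: each case contradicts exactly one of the forbidden edge types inside $V_i$. Hence $A_i:=N_{\eps/3}(T_i)\cup N_{\eps/3}(-T_i)$ is symmetric and, by the triangle inequality, is the disjoint union of two halves interchanged by the antipodal map, so no $A_i$ connects an antipodal pair. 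As the $A_i$'s cover $S^d$ (since $T_i\cup -T_i=V_i\cup -V_i$), this contradicts (b). For (a) $\Rightarrow$ (b) I would run the same construction in reverse: assuming no $A_i$ connects antipodal points, each $A_i$ splits as $A_i=B_i^+\sqcup B_i^-$ with $B_i^-=-B_i^+$ by choosing one component from each antipodal pair. Taking $\delta\leq\eps_d$ smaller than the Lebesgue number of the $2d$-cover $\{B_i^\pm\}_{i,\pm}$ and assigning to each $x\in S^d$ a label $(c(x),\epsilon(x))$ such that the open $\delta$-ball around $x$ lies in $B_{c(x)}^{\epsilon(x)}$, the disjointness $B_i^+\cap B_i^-=\emptyset$ forces any positive (resp.\ negative) edge of $\BS(d,\delta)$ within a colour class to preserve (resp.\ flip) $\epsilon$; hence $c$ is a balanced $d$-colouring of $\BS(d,\delta)$, contradicting (a).

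For (c1) $\Rightarrow$ (b), from a symmetric open cover $A_1,\ldots,A_d$ with Lebesgue number $\delta$, I would form the symmetric closed cover $F_i:=\{x:\dist(x,S^d\setminus A_i)\geq\delta/2\}\subseteq A_i$ and apply (c1) to get some $F_i$ whose every open neighbourhood --- in particular $A_i$ itself --- connects antipodal points. For (b) $\Rightarrow$ (c2), I would apply (b) to the symmetric open cover $\{N_{1/n}(F_i)\}_{i\in[d]}$ for each $n\in\N$; pigeonholing in $n$ yields a fixed index $i$ together with arcs $C_n\subseteq N_{1/n}(F_i)$ whose endpoints $x_n$ and $-x_n$ are antipodal. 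Passing to a subsequence so that $x_n\to x$, the Kuratowski upper limit $\limsup_n C_n$ is a non-empty closed connected subset of $\bigcap_n\overline{N_{1/n}(F_i)}=F_i$ containing both $x$ and $-x$, which is exactly (c2).

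The main obstacle will be this last step: it invokes the classical but non-trivial topological fact that, in a compact metric space, the Kuratowski upper limit of a sequence of closed connected sets with non-empty lower limit is itself connected. The remaining implications are essentially covering-and-Lebesgue-number bookkeeping, together with the elementary observation that a symmetric open set containing no antipodal pair inside any single component decomposes into two disjoint pieces swapped by the antipodal involution.
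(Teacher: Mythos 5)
Your proposal is correct, but it traverses the equivalences along a different cycle than the paper and with different tools, so a comparison is worthwhile. The paper proves $(a)\Rightarrow(c1)\Rightarrow(b)\Rightarrow(a)$ and $(c1)\Leftrightarrow(c2)$, whereas you prove $(a)\Leftrightarrow(b)$ directly and close the loop via $(b)\Rightarrow(c2)\Rightarrow(c1)\Rightarrow(b)$. In translating between balanced colourings and symmetric covers, the paper works with closed colour classes: for $(b)\Rightarrow(a)$ it symmetrizes by assigning $x$ and $-x$ the same colour, takes $\eps/8$-neighbourhoods, and exhibits an explicit negative cycle along an approximating path; for $(a)\Rightarrow(c1)$ it shrinks neighbourhoods of the $F_i$'s and argues that each $F_i$ becomes a balanced set of $\BS(d,\eps)$. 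You instead invoke Harary's bipartition characterization of balance on each colour class to build the symmetric set $T_i\cup(-T_i)$ (your four-case check is correct, and the $\eps/3$-neighbourhoods are indeed disjoint and swapped by the antipodal map), and in the reverse direction you split each $A_i$ into antipodally paired unions of components and use a Lebesgue number; this is cleaner in that it never constructs a cycle, at the mild cost of choosing one component from each antipodal pair and quoting the Lebesgue number lemma. Your $(c1)\Rightarrow(b)$ builds the closed shrinking explicitly rather than citing Alexandroff--Hopf, which is fine. The one place where you import genuinely heavier machinery is $(b)\Rightarrow(c2)$: the connectedness of the Kuratowski upper limit of connected closed sets with non-empty lower limit in a compact metric space. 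That theorem is classical and correctly applied (the limit set lies in $\bigcap_n\overline{N_{1/n}(F_i)}=F_i$ and contains the antipodal pair), but note that the paper's corresponding step, $(c1)\Rightarrow(c2)$, gets by with an elementary separation argument: if $x$ and $-x$ lay in different components of the compact set $F_i$, a clopen separation of $F_i$ at positive distance would disconnect all sufficiently small neighbourhoods of $F_i$ between $x_j$ and $-x_j$. Either route is valid; yours makes the limiting step fully explicit, the paper's keeps the prerequisites lighter.
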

    
    \begin{proof}[Proof of $(a) \Rightarrow (c1)$.] Assume that antipodally symmetric sets $F_1, F_2,\ldots, F_d$ is a closed cover of $S^d$. Further, suppose for each $i\in[d]$ there is an open neighbourhood $U_i\supset F_i$ that does not connect any pair of antipodal points.
    
    By compactness of the sphere, there is an $\eps_i>0$ such that $F_i\subset F_{i,\eps_i}\subset U_i$ where $F_{i, \eps_i}$ is the $\eps_i$-neighbourhood of $F_i$. Being a subset of $U_i$, $F_{i,\eps_i}$ does not connect a pair of antipodal points either.
    For every $\eps_0>0$, let $\eps=\min_{i\in\{0,1,2,\ldots,d\}}\eps_i.$ We claim that $F_i$ is a balanced set in $\BS(d, \eps)$.
    
    If $x, y\in F_i$ belong to different connected components of $F_{i,\eps_i}$, then $\dist(x,y)\geq 2\eps_i>\eps.$ In that case $x$ and $y$ cannot be joined by a positive edge in $\BS(d, \eps)$. On the other hand, if $x, y\in F_i$ belong to the same component of $F_{i,\eps_i}$, since $-x$ is not in the same component as $x$, we have $\dist(-x,y)\geq 2\eps_i>\eps.$ This shows that vertices in the same component cannot be joined by a negative edge. 
    
    We now claim that $F_{i}$ is a balanced set of $\BS(d, \eps)$. Being symmetric, if there is a negative cycle in $F_{i}$, there is a negative cycle, say $C$, with exactly one negative edge. In this cycle, the sequence of positive edges implies that all its vertices are in the same component of $F_{i,\eps_i}$. But the two ends of the only negative edge must be on two different components.
    The collection of $F_{i}$, $1\leq i \leq d$, then gives a balanced $d$-colouring of $\BS(d, \eps)$, contradicting $(a)$.
    \end{proof}
    
    \begin{proof}[Proof of $(c1) \Rightarrow (b)$] This is a consequence of the fact that for every open cover $A_1, A_2,\ldots, A_d$ of $S^d$, there is a closed cover $F_1, F_2,\ldots, F_d$ of $S^d$ such that $F_i\subseteq A_i$ for each $i=1, 2, \ldots d$ (see for example \cite{AH45}).     	
    \end{proof}
    
    \begin{proof}[Proof of $(b) \Rightarrow (a)$] For every $\eps_0>0$, suppose there is an $\eps\leq \eps_0$ such that $\BS(d,\eps)$ is balanced $d$-colourable. Let $F_1, F_2, \ldots, F_d$ be the colour classes of a balanced $d$-colouring for $\BS(d,\eps)$. We may assume that for each point $x$, $x$ and $-x$ are assigned the same colour. Thus each $F_i$ is a symmetric set. Let $A_i$ be the $(\eps /8)$-neighbourhood of $F_i$ for each $i$, $i=1, 2, \ldots, d$. By $(b)$, there is a pair $x$ and $-x$ of antipodal points connected in some $A_j$. As $A_j$ is an open set, these two points are path-connected. Thus there is sequence $x=x_1, x_2, \ldots, x_k=-x$ of vertices such that the distance between $x_l$ and $x_{l+1}$ is at most $\eps /4$. By the choice of $A_j$, for each $x_j$, there is a vertex $x'_j$ in $F_j$ at distance at most $\eps /8$ from $x_j$. Hence $x'_j$ and $x'_{j+1}$ are at distance at most $\eps /2$. Moreover, $x'_1$ and $-x'_k$ have distance at most $\eps/2$ as well.  So the vertices  $\{x'_1,x'_2,\ldots, x'_k\}$ induce a negative cycle, contradicting the fact that $F_j$ induces a balanced set.  	
    \end{proof}

    \begin{proof}[Proof of $(c1) \Leftrightarrow (c2)$] The statement $(c2)$ contains $(c1)$. For the other direction, suppose every $\eps$-neighbourhood of $F_i$ connects a pair of antipodal points. Let $x_j$ and $-x_j$ be a pair of antipodal points connected in the $(1/j)$-neighbourhood of $F_i$. Since $F_i$ is a closed (and compact) set, there is a limit point $x\in F_i$ of $\{x_j\}$. Then the antipodal pair $x$ and $-x$ of points are connected by $F_i$.     	
    \end{proof}

    We remark that \Cref{thm-signedBorsuk} $(b)$ and $(c2)$ first appear in a slightly different formulation in Philip Bacon's paper \cite{Bacon66} as statements $O_n(X)$ and $C_n(X)$ for an arbitrary $\Z_2$-space $X$. We give the above proof for completeness.

Similar to the relation between Borsuk graphs and Schrijver graphs, for $d=n-k$ and sufficiently small $\eps > 0$, $\BS(d, \eps)$ admits a homomorphism to $\SSG(n,k)$. One such homomorphism is described as follows.
     
    By Theorem \ref{thm:signedGaleSchrijver}, we may assume that  $\pm [n]$ is embedded in  $S^d$ such that  $i$ and $-i$ are antipodal and any open hemisphere contains an alternating $k$-set of $\pm [n]$, which is a vertex of $\SSG(n,k)$. 
    For each point $x$ of $S^d$,   let $A_x$ be an alternating $k$-set of $\pm [n]$, which is a vertex of $\SSG(n,k)$, contained in the open hemisphere centered at $x$. Let $f(x) = A_x$. By compactness, if $\eps > 0$ is small enough, then  $f$ is a homomorphism from $\BS(d, \eps)$ to $\SSG(n,k)$ that preserves the signs of the edges.
   So Theorem \ref{thm-signedBorsuk} gives an alternate proof of the result that $\chi_b(\SSG(n,k))=n-k+1$.

   Below we show a connection to yet another formulation of the signed Borsuk-Ulam Theorem.
    
    Let $X$ be a topological space, the \emph{Lusternik-Schnirelmann category} of $X$ is the smallest integer $k$ such that there exists an open cover $U_0, U_1, \ldots, U_k$ with each $U_i$ being a contractible open set in $X$. Such a cover $\{U_i\}_{i=0}^k$ is a called a \emph{categorical} cover of $X$. We refer to \cite{Hatcher2001} for the definition of contractible space.
    
    \begin{theorem}\label{thm:Lusternik-Schnirelmann}
    The Lusternik-Schnirelmann category of the real projective space $\R\mathrm{P}^d$ is $d$, that is, for every open cover $U_1, U_2,\ldots, U_d$ of $\R\mathrm{P}^d$, one of the $U_i$'s is non-contractible in $\R\mathrm{P}^d$.
    \end{theorem}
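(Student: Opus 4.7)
The plan is to deduce \Cref{thm:Lusternik-Schnirelmann} from the signed Borsuk--Ulam theorem in the form \Cref{thm-signedBorsuk}(b), by pulling the given open cover back along the antipodal double cover $p:S^d\to\RP^d$ and translating the resulting antipodal connection into a nontrivial loop in $\pi_1(\RP^d)$.

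First, given an open cover $U_1,\ldots,U_d$ of $\RP^d$, I would set $\tilde U_i:=p^{-1}(U_i)\subseteq S^d$. Each $\tilde U_i$ is open and antipodally symmetric (since $p$ commutes with the antipodal map), and together they cover $S^d$. Thus \Cref{thm-signedBorsuk}(b) produces an index $i$ and an antipodal pair $x,-x$ lying in the same connected component of $\tilde U_i$. Since open subsets of $S^d$ are locally path-connected, that component is path-connected, so I fix a continuous path $\gamma:[0,1]\to\tilde U_i$ with $\gamma(0)=x$ and $\gamma(1)=-x$.

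Second, I push $\gamma$ down to obtain a loop $\bar\gamma:=p\circ\gamma$ in $U_i$ based at $p(x)$. By uniqueness of path lifting through the covering $p$, the lift of $\bar\gamma$ starting at $x$ is exactly $\gamma$, which ends at $-x\neq x$; hence $\bar\gamma$ represents a nontrivial element of $\pi_1(\RP^d)$. If the inclusion $\iota:U_i\hookrightarrow\RP^d$ were null-homotopic, the induced map $\iota_{*}:\pi_1(U_i,p(x))\to\pi_1(\RP^d,p(x))$ would vanish, yet it sends $[\bar\gamma]$ to a nontrivial class --- a contradiction. Hence $U_i$ is not contractible in $\RP^d$, proving the theorem.

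The only real subtlety is to correctly interpret ``contractible in $X$'' in the definition of the Lusternik--Schnirelmann category as ``inclusion null-homotopic in $X$'' (rather than ``contractible as an abstract space'', which would be strictly stronger) and then to translate the geometric conclusion of the signed Borsuk--Ulam theorem (an antipodal pair lies in a single component of $\tilde U_i$) into the algebraic obstruction (the inclusion-induced map on $\pi_1$ is nontrivial). Once this covering-space dictionary is in place, the argument is a direct application of the uniqueness of path lifting.
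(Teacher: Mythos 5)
Your proposal is correct and follows essentially the same route as the paper: pull the cover back along the double cover $S^d\to\RP^d$, apply \Cref{thm-signedBorsuk}$(b)$, and use the covering-space dictionary (a set $U_i$ is contractible in $\RP^d$ iff its symmetric preimage fails to connect any antipodal pair) to conclude. The paper simply cites Hatcher (Example 1.43) for this last equivalence, whereas you spell out the path-lifting argument explicitly; both are the same proof.
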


     Here we show that this theorem is also equivalent to any of the statements of \Cref{thm-signedBorsuk}.

    \begin{proof}[\Cref{thm:Lusternik-Schnirelmann} $\Leftrightarrow$ \Cref{thm-signedBorsuk} $(b)$]

    A symmetric open cover $A_1, \ldots, A_{d}$ of $S^d$, corresponds to a natural open cover $U_1, \ldots, U_{d}$ of $\RP^d$ through the quotient map $q: S^d\to \RP^d$ that identifies the antipodal points. Then, each of the $U_i$'s is non-contractible if and only if the corresponding $A_i$ connects a pair of antipodal points of $S^d$, (see\cite{Hatcher2001}, Example 1.43 for more details).
     \end{proof} 
    
    The Lusternik-Schnirelmann category can be equivalently defined with closed categorical covers, establishing the equivalence between \Cref{thm:Lusternik-Schnirelmann} and \Cref{thm-signedBorsuk} $(c2)$.

    Finally, we remark that \Cref{thm:Lusternik-Schnirelmann} is one of the equivalent forms of the original Borsuk-Ulam theorem (cf. for example \cite{CTOT03,Steinlein93}). Therefore, we have an equivalence among all \Cref{thm:Borsuk-Ulam,thm:BorsukColoring,thm-signedBorsuk,thm:Lusternik-Schnirelmann} in this section.

	\section*{Acknowledgment}
    
    We thank an anonymous referee for their valuable comments, which helped clarify some terminology and improve the presentation of this work.
    
    The second author has received support under the program ``Investissement d'Avenir" launched by the French Government and implemented by ANR, with the reference ``ANR‐18‐IdEx‐0001" as part of its program ``Emergence". The third and sixth authors are supported by grant NSFC   12371359. The sixth author is also supported by grant U20A2068.


\begin{thebibliography}{10}

\bibitem{AH45}
P.~Alexandroff and H.~Hopf.
\newblock {\em Topolgie I}.
\newblock Ann. Arbor, Michigan, 1945.





\bibitem{Bacon66} Philip Bacon.\newblock Equivalent formulations of the Borsuk-Ulam theorem. \newblock {\em Canadian J. of Math.}, 18:492--502, 1966.



\bibitem{CTOT03}
Octav Cornea, Gregory Lupton, John Oprea, and Daniel Tanre.
\newblock {\em Lusternik-Schnirelmann Category}.
\newblock American Mathematical Society, 2003.

\bibitem{Harary1953}
Frank Harary.
\newblock On the notion of balance of a signed graph.
\newblock {\em Michigan Math. J.}, 2:143--146, 1953/54.

\bibitem{Hatcher2001}
Allen Hatcher.
\newblock {\em Algebraic Toppology}.
\newblock Cambirdge University Press, 2001.

\bibitem{JMNNQ24+}
Andrea Jimenez, Jessica McDonald, Reza Naserasr, Kathryn Nurse, and Daniel Quiroz.
\newblock Balanced-chromatic number and hadwiger-like conjectures.
\newblock {\em Priprint}, 2024+.

\bibitem{KN21}
Franti\v{s}ek Kardo\v{s} and Jonathan Narboni.
\newblock On the 4-color theorem for signed graphs.
\newblock {\em European J. Combin.}, 91:Paper No. 103215, 8, 2021.

\bibitem{MR0093538}
F.~Kasch, M.~Kneser, and H.~Kupisch.
\newblock Unzerlegbare modulare {D}arstellungen endlicher {G}ruppen mit zyklischer {$p$}-{S}ylow-{G}ruppe.
\newblock {\em Arch. Math. (Basel)}, 8:320--321, 1957.

\bibitem{KNWYZZ25}
L.~Kuffner, R.~Naserasr, L.~Wang, X.~Yu, Zhou H., and Zhu X.
\newblock Fractional balanced coloring of signed graphs.
\newblock {\em manuscript}, 2025+.

\bibitem{MR0514625}
L.~Lov\'{a}sz.
\newblock Kneser's conjecture, chromatic number, and homotopy.
\newblock {\em J. Combin. Theory Ser. A}, 25(3):319--324, 1978.

\bibitem{M03}
Ji\v{r}\'i Matou\v{s}ek.
\newblock {\em Using the {B}orsuk-{U}lam theorem}.
\newblock Universitext. Springer-Verlag, Berlin, 2003.
\newblock Lectures on topological methods in combinatorics and geometry, Written in cooperation with Anders Bj\"orner and G\"unter M. Ziegler.

\bibitem{m2016chromatic}
Edita M\'a\v~cajov\'a, Andr\'e{} Raspaud, and Martin \v~Skoviera.
\newblock The chromatic number of a signed graph.
\newblock {\em Electron. J. Combin.}, 23(1):Paper 1.14, 10, 2016.

\bibitem{Munkers2000}
James~R. Munkres.
\newblock {\em Topology}.
\newblock Prentice Hall, Inc., Upper Saddle River, NJ, second edition, 2000.

\bibitem{Schrijver}
Alexander Schrijver.
\newblock Vertex-critical subgraphs of kneser graphs.
\newblock {\em Nieuw Archiev voor Wiskunde}, 3(26):454--461, 1978.

\bibitem{ST06}
G{\'{a}}bor Simonyi and G{\'{a}}bor Tardos.
\newblock Local chromatic number, {KY} fan's theorem, and circular colorings.
\newblock {\em Comb.}, 26(5):587--626, 2006.

\bibitem{Steinlein93}
H.~Steinlein.
\newblock Spheres and symmetry: Borsuk's antipodal theorem.
\newblock {\em Topological methodos in nonlinear analysis, Journal of the Juliusz Schauder Center}, 1:15--33, 1993.






\bibitem{Zaslavsky1982-DM}
Thomas Zaslavsky. \newblock Signed graph coloring.
\newblock {\em Discrete Mathematics}, 39(2):215--228, 1982.



\bibitem{Zaslavsky1987}
Thomas Zaslavsky.
\newblock Balanced decompositions of a signed graph.
\newblock {\em J. Combin. Theory Ser. B}, 43(1):1--13, 1987.

\end{thebibliography}
\end{document}